\theoremstyle{plain}
\newtheorem{theorem}{Theorem}[section]
\newtheorem{lemma}[theorem]{Lemma}
\newtheorem{proposition}[theorem]{Proposition}
\theoremstyle{definition}
\newtheorem{definition}[theorem]{Definition}
\newtheorem{example}[theorem]{Example}
\theoremstyle{remark}
\newtheorem{remark}[theorem]{Remark}
\newcommand{\defin}[1]{\textcolor{blue}{\emph{#1}}}
\DeclareMathOperator{\sgn}{sgn}
\DeclareMathOperator{\height}{ht}
\DeclareMathOperator{\maj}{maj}
\DeclareMathOperator{\DES}{DES}
\newcommand{\SSYT}{\mathrm{SSYT}}
\newcommand{\SYT}{\mathrm{SYT}}
\newcommand{\SYTT}{\mathrm{SYT\text{-}tuples}}
\newcommand{\SSYTT}{\mathrm{SSYT\text{-}tuples}}
\newcommand{\BST}{\mathrm{BST}}
\DeclareMathOperator{\stat}{des^+}
\DeclareMathOperator{\idx}{idx}
\newcommand{\tm}{\text{-}} %a shorter minus sign, used in tableaux.
\newcommand{\xvec}{{\boldsymbol x}}
\newcommand{\emphDes}[1]{{\underline{#1}}}
\numberwithin{equation}{section}
\title[Descents for BST]{A refinement of the Murnaghan--Nakayama rule by descents for border strip tableaux}
\author{Stephan Pfannerer}
\thanks{Stephan Pfannerer was partially supported by  the  Austrian  Science  Fund(FWF) P29275 and is a recipient of a DOC Fellowship of the Austrian Academy of Sciences.}
\address{Institut f\"ur Diskrete Mathematik und Geometrie, TU Wien, Austria}
\email{stephan.pfannerer@tuwien.ac.at}
\begin{document}

\begin{abstract}
Lusztig's fake degree is the generating polynomial for the major index of
standard Young tableaux of a given shape. Results of Springer and James \&
Kerber imply that, mysteriously, its evaluation at a $k$-th primitive root
of unity yields the number of border strip tableaux with all strips of
size $k$, up to sign. This is essentially the special case of the Murnaghan--Nakayama rule for evaluating an irreducible character of the symmetric group at a rectangular partition.

We refine this result to standard Young tableaux and border strip tableaux
with a given number of descents. To do so, we introduce a new statistic for
border strip tableaux, extending the classical definition of descents in
standard Young tableaux. Curiously, it turns out that our new statistic
is very closely related to a descent set for tuples of standard Young
tableaux appearing in the quasisymmetric expansion of LLT polynomials
given by Haglund, Haiman and Loehr.

\smallskip
\noindent \textbf{Keywords.} Border strip tableaux, Descents, Murnaghan--Nakayama rule, Fake degree
\end{abstract}

\maketitle

\section{Introduction}

Let $\SYT(\lambda)$ denote the set of all \defin{standard Young tableaux} of shape $\lambda$ and size $n$. 
An entry $i$ of a standard Young tableau $T$ is a \defin{descent} of T, if $i+1$ appears in a strictly lower row in $T$ in English notation. Let $\DES(T)$ denote the set of descents of $T$ and denote with $\maj(T)$, the \defin{major index} of $T$, the sum of all descents of $T$.

Our main result, \cref{thm:mainThm}, provides a natural combinatorial
interpretation of

\[
f^{\lambda}(q,t)\coloneqq \sum_{T\in \SYT(\lambda)} q^{\maj(T)}t^{|\DES(T)|}
\]
when $q$ is a root of unity.
This refines the classical interpretation of the evaluation of Lusztig's fake degree polynomial $f^\lambda(q) \coloneqq f^\lambda(q,1)$, as we show below. The bivariate generating function itself was already considered by R. Stanley \cite[Proposition 8.13.]{Stanley1971} in the more general setting of $(P, \omega)$ partitions.

By a result of T. Springer \cite[Proposition 4.5]{Springer1974}, $f^\lambda(q)$ coincides with the restriction of the irreducible $\mathfrak{S}_n$-character $\chi^\lambda$ to the cyclic subgroup generated by the long cycle, represented as the group of complex roots of unity. More precisely, for $k\mid n$, let $\xi$ be a primitive $k$-th root of unity and let $\rho=(k^{n/k})$ be a rectangular partition, then we have $f^\lambda(\xi) = \chi^\lambda(\rho)$.
In general, a practical way to compute the character value $\chi^{\lambda}(\rho)$  for an arbitrary partition $\rho$ is the Murnaghan--Nakayama rule \cite[Theorem 7.17.3]{Stanley1999}.

For a filling $B$ of a Young diagram $\lambda$ with weakly increasing rows and columns, let $B^i$ be the collection of cells in $B$ containing $i$. We say that $B$ is a \defin{border strip tableau} of type $\rho= (\rho_1,\dots,\rho_\ell)\vdash n$ if, for all $1\le i \le \ell$, the cells $B^i$ form a connected skew shape of size $\rho_i$ that does not contain a $2\times 2$ rectangle.
In this case we call $B^i$ a \defin{border strip} of size $\rho_i$ and define the \defin{height} $\height(B^i)$ to be the number of rows it spans minus $1$. Furthermore, we define the height of $B$ to be the sum of the heights of its strips. See \cref{fig:BST} for an example of a border strip tableau of height $7$.

\begin{figure}
\[ B = 
\begin{tikzpicture}[x=1.2em, y=1.2em,baseline={([yshift=-1ex]current bounding box.center)}]
\draw[line width=0.8] (0,0) -- (6,0) -- ++(0,-1) -- ++(-1,0) -- ++(0,-1) -- ++(-1,0) -- ++(0,-1) -- ++(-2,0) -- ++(0,-1) -- ++(-0,0) -- ++(0,-1) -- ++(-0,0) -- ++(0,-1) -- ++(-2,0) -- cycle;
\draw[line width=0.8] (0,0) -- (4,0) -- ++(0,-1) -- ++(-0,0) -- ++(0,-1) -- ++(-0,0) -- ++(0,-1) -- ++(-2,0) -- ++(0,-1) -- ++(-0,0) -- ++(0,-1) -- ++(-0,0) -- ++(0,-1) -- ++(-2,0);
\draw[line width=0.8] (0,0) -- (4,0) -- ++(0,-1) -- ++(-0,0) -- ++(0,-1) -- ++(-0,0) -- ++(0,-1) -- ++(-2,0) -- ++(0,-1) -- ++(-1,0) -- ++(0,-1) -- ++(-1,0);
\draw[line width=0.8] (0,0) -- (4,0) -- ++(0,-1) -- ++(-0,0) -- ++(0,-1) -- ++(-0,0) -- ++(0,-1) -- ++(-4,0);
\draw[line width=0.8] (0,0) -- (3,0) -- ++(0,-1) -- ++(-0,0) -- ++(0,-1) -- ++(-0,0) -- ++(0,-1) -- ++(-3,0);
\draw[line width=0.8] (0,0) -- (3,0) -- ++(0,-1) -- ++(-0,0) -- ++(0,-1) -- ++(-3,0);
\draw[line width=0.8] (0,0) -- (2,0) -- ++(0,-1) -- ++(-1,0) -- ++(0,-1) -- ++(-1,0);
\draw[line width=0.8] (0,0) -- (0,0);

\draw (4.5,-1.5) node{7};\draw (4.5,-0.5) node{7};\draw (5.5,-0.5) node{7};
\draw (0.5,-5.5) node{6};\draw (1.5,-5.5) node{6};\draw (1.5,-4.5) node{6};
\draw (0.5,-4.5) node{5};\draw (0.5,-3.5) node{5};\draw (1.5,-3.5) node{5};
\draw (3.5,-2.5) node{4};\draw (3.5,-1.5) node{4};\draw (3.5,-0.5) node{4};
\draw (0.5,-2.5) node{3};\draw (1.5,-2.5) node{3};\draw (2.5,-2.5) node{3};
\draw (1.5,-1.5) node{2};\draw (2.5,-1.5) node{2};\draw (2.5,-0.5) node{2};
\draw (0.5,-1.5) node{1};\draw (0.5,-0.5) node{1};\draw (1.5,-0.5) node{1};
\end{tikzpicture}
\]
\caption{A border strip tableau of height $7$.}\label{fig:BST}
\end{figure}

The Murnaghan--Nakayama rule states that
\[
\chi^{\lambda}(\rho) = \sum_{B}(-1)^{\height(B)},
\]
where the sum is taken over all border strip tableaux of shape $\lambda$ and type $\rho$. In the special case where $\rho$ is a rectangular partition $(k^{n/k})$ and all strips in $B$ have the same size $k$, this rule is cancellation free, due to a theorem by G. James and A. Kerber \cite[Theorem 2.7.27]{James1984}.
This means that the parity of $\height(B)$ only depends on $\lambda$ and the strip size $k$.
Thus we obtain 
\begin{equation}\label{eq:fakedegreeEvaluation}
f^\lambda(\xi) = \epsilon_{\lambda, k}\cdot |\BST(\lambda, k)|,
\end{equation}
where $\BST(\lambda, k)$ is the set of all border strip tableaux of shape $\lambda$ and type $(k^{n/k})$, and $\epsilon_{\lambda, k} = (-1)^{\height(B)}$ for any border strip tableau $B\in \BST(\lambda,k)$.

Our main result, \cref{thm:mainThm}, refines this special case of the Murnaghan--Nakayama rule as follows.
Provided that the set $\BST(\lambda, k)$ is not empty, it turns out that $f^\lambda(\xi,t)$ is, up to sign, a generating function for a very natural statistic on this set. That is,
\[
f^{\lambda}(\xi,t) = \epsilon_{\lambda,k} \cdot \sum_{B\in \BST(\lambda,k)}t^{\stat(B)}.
\]
%for some $\epsilon_{\lambda,k}\in\{\pm1\}$.
The statistic $\stat$ (see \cref{def:stat}) extends the classical definition of descents for standard Young tableaux.

The motivation to study $f^\lambda(\xi,t)$ comes from the desire to refine recent results by P. Alexandersson, S. Pfannerer, M. Rubey, and J. Uhlin \cite{alexandersson_pfannerer_rubey_uhlin_2021}. They used \eqref{eq:fakedegreeEvaluation} to show that $(\chi^\lambda)^2$ carries the action of a permutation representation of the cyclic group of order $n$. Equivalently, there exists a cyclic group action $\tau$ of order $n$ such that the triple $(\SYT(\lambda)\times \SYT(\lambda), \tau, (f^\lambda)^2)$ exhibits the cyclic sieving phenomenon introduced by V. Reiner, D. Stanton and D. White \cite{Reiner2004}. Note that the action $\tau$ remains unknown. Refining their work may eventually lead to an explicit description of $\tau$. See B. Sagan's survey article \cite{Sagan2011} for more background on the cyclic sieving phenomenon.

The structure of the paper is as follows: In \cref{sec:definitions} we introduce relevant definitions, fix notation and state our main theorem. The rest of the sections is dedicated for the proof, which is split up into three important steps. In \cref{sec:partitionQuotient} we introduce the Littlewood quotient map and in \cref{sec:schur} we relate our problem to Schur functions. Finally, in \cref{sec:bijection} we conclude the main result with a bijection.

\section{Definitions and main theorem}\label{sec:definitions}
We begin by introducing relevant definitions and notation; for more details we refer to the books by I. Macdonald \cite{Macdonald1995} and R. Stanley \cite{Stanley1999}.
A \defin{partition} $\lambda=(\lambda_1,\dots,\lambda_\ell)$ of $n$, written as $\lambda \vdash n$, is a weakly decreasing sequence of positive integers that sum up to $n\eqqcolon |\lambda|$.
The \defin{Young diagram} of shape $\lambda$ in English notation is the  collection of $n$ cells, arranged in $\ell$ left-justified rows of lengths $\lambda_1,\dots,\lambda_\ell$.
The rows of a Young diagram are indexed from top to bottom starting with one, and the columns are indexed from left to right starting with one.
To each cell $x$ we associate its \defin{content}, $c(x)$, which is its column index minus its row index. Finally, we associate to each cell $x$ its \defin{hook value}, $h(x)$, which is the number of cells to the right of $x$ plus the number of cells below $x$ plus one.

\begin{figure}
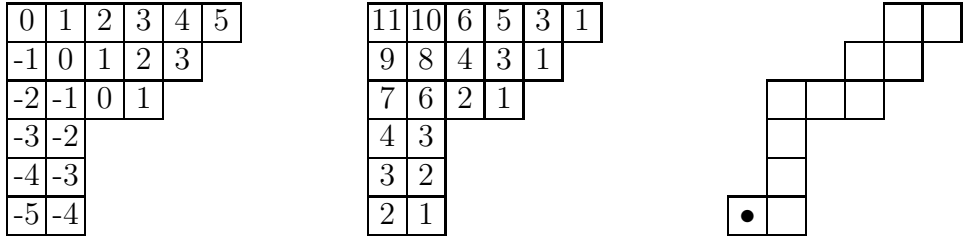

\ytableausetup{boxsize=1.2em}
\[
\ytableaushort{012345,{\tm1}0123,{\tm2}{\tm1}01,{\tm3}{\tm2},{\tm4}{\tm3},{\tm5}{\tm4}}\qquad\qquad
\ytableaushort{{11}{10}6531,98431,7621,43,32,21}\qquad\qquad \ytableaushort{\none\none\none\none{}{},\none\none\none{}{},\none{}{}{},\none{},\none{},{\bullet}{}}
\]
\caption{The Young diagram of the partition $654222$, where each cell is filled with its content, the same diagram where each cell is filled with its hook value and a border strip, where its tail is decorated with a bullet.}\label{fig:partition}
\end{figure}

Let $(\lambda,\mu)$ be a pair of partitions such that the Young diagram $\mu$ is completely contained in the Young diagram $\lambda$. The cells that are in $\lambda$ but not in $\mu$ form the \defin{skew shape} $\lambda/\mu$.

A \defin{border strip} (or ribbon or rim hook) is a connected skew shape that does not contain a $2\times 2$ square.
The \defin{tail} of a border strip is its unique cell with smallest content.
The \defin{height} $\height(S)$ of a border strip $S$ is the number of rows it spans minus one.

\begin{definition}
A \defin{border strip tableau} of shape $\lambda\vdash n$ with strip size $k\mid n$ is a Young diagram filled with the integers $\{1,\ldots,n/k\}$ such that
\begin{itemize}
\item the values in each row from left to right and each column from top to bottom are weakly increasing and
\item the cells containing the value $i$ form a border strip of size $k$ for all $1\le i\le n/k$.
\end{itemize}

The set of all such tableaux is denoted with $\BST(\lambda,k)$.
The \defin{height} $\height(B)$ of a border strip tableau $B$ is the sum of the heights of its border strips.
\end{definition}

\begin{example}
The Young diagram of $654222$ is given in \cref{fig:partition} on the left. In the same Figure on the right we see the skew shape $654222/43111$. It is a border strip of size $11$ and height~$5$.
 The tableau $B$ in \cref{fig:BST} is a border strip tableau in $\BST(654222,3)$ of height~$7$.
\end{example}

\begin{remark}
A border strip tableau with all strips having size $1$ corresponds to a \defin{standard Young tableau}. % This is a filling of the Young diagram with the values $\{1,\dots, n\}$ such that the entries in each row and each column are strictly increasing.
We also write $\SYT(\lambda)$ for $\BST(\lambda,1)$.
\end{remark}
One may also think of a border strip tableau $B\in \BST(\lambda,k)$ as a flag of partitions
$\emptyset = \nu_0 \subset \nu_1 \subset \dots \subset \nu_{n/k} = \lambda$ such that
$B^i\coloneqq\nu_{i}/\nu_{i-1}$ is a border strip of size $k$ for all $1\le i \le n/k$.
The cells in $B^i$ are precisely the cells in $B$ with label $i$.

More generally, we say that a partition $\nu$ is obtained from $\lambda$ \defin{by removing} a border strip of size $k$, if $\lambda/\nu$ is a border strip of size $k$.
Successively removing border strips of size $k$ from $\lambda$ as long as possible gives a partition $\nu_0$ which turns out to be independent from the order in which the strips are removed. Hence, $\nu_0$ is well defined and called the \defin{$k$-core} of $\lambda$. See also G. James and A. Kerber \cite[Chapter 2.7]{James1984} for further details.

\begin{proposition}
$\BST(\lambda,k)$ is not empty, if and only if $\lambda$ has empty $k$-core.
\end{proposition}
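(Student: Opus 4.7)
The plan is to derive both implications directly from the well-definedness of the $k$-core, which is stated just before the proposition. The essential observation is that a border strip tableau $B \in \BST(\lambda, k)$ is exactly the same data as a maximal chain of partitions
\[
\emptyset = \nu_0 \subset \nu_1 \subset \cdots \subset \nu_{n/k} = \lambda
\]
in which each step $\nu_i/\nu_{i-1}$ is a border strip of size $k$. So the question reduces to whether such a chain terminating at $\emptyset$ exists.

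For the forward direction, I would take a border strip tableau $B \in \BST(\lambda, k)$, read off the flag $\nu_0 \subset \cdots \subset \nu_{n/k}$ as above, and reverse it. This is a valid sequence of size-$k$ border strip removals starting from $\lambda$ and ending at $\emptyset$. Since, by the fact cited from James--Kerber, the result of successively removing $k$-strips as long as possible is independent of the order, every such maximal sequence must end at the $k$-core. Hence the $k$-core of $\lambda$ is $\emptyset$.

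For the converse, I would use the same invariance: if the $k$-core of $\lambda$ is empty, then by definition some sequence of iterated $k$-strip removals starting from $\lambda$ terminates at $\emptyset$. Reversing this sequence produces the desired flag $\emptyset = \nu_0 \subset \cdots \subset \nu_{n/k} = \lambda$, which in turn determines a border strip tableau in $\BST(\lambda, k)$ by labelling the strip $\nu_i/\nu_{i-1}$ with $i$.

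There is no real obstacle here; the entire content is the well-definedness of the $k$-core, which is being imported from James--Kerber. Once that fact is granted, the equivalence is just a reformulation of the definition in terms of flags versus successive removals. I would therefore keep the proof to just a few lines, making explicit the bijection between border strip tableaux of shape $\lambda$ and strip size $k$ and maximal chains of $k$-strip removals from $\lambda$ to $\emptyset$.
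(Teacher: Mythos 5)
Your proof is correct, and it is essentially what the paper leaves implicit: the proposition is stated without proof, right after the definition of the $k$-core and the citation of James--Kerber, where the well-definedness of the $k$-core is imported. Your two-line argument---reading a border strip tableau as a maximal chain of $k$-strip removals from $\lambda$ to $\emptyset$, and using order-independence of the $k$-core to go in both directions---is exactly the reformulation the paper has in mind, and matches the paper's own identification of border strip tableaux with flags of partitions stated a few lines above the proposition.
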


From now on, we remove all labels in the graphical representation of a border strip tableau that are not in the tail of a strip.
We now define the descent set of a border strip tableau, extending the classical definition for standard Young tableaux.
\begin{definition}
Let $x_i$ be the unique cell with smallest content in a border strip tableau $B$ that contains $i$.
We call $i$ a \defin{descent} of $B$ if $x_{i+1}$ appears in a strictly lower row in $B$ than $x_i$. $\DES(B)$ denotes the set of all descents of $B$.
\end{definition}
For example, let $B$ be the border strip tableau in \cref{fig:BST}. Following our new convention, $B$ is depicted again in \cref{fig:BSTdescents}. Hence $\DES(B)=\{2,4,5\}$.

\begin{figure}[h]
\centering
\[B = 
\begin{tikzpicture}[x=1em, y=1em,baseline={([yshift=-1ex]current bounding box.center)}]
\draw[line width=0.8] (0,0) -- (6,0) -- ++(0,-1) -- ++(-1,0) -- ++(0,-1) -- ++(-1,0) -- ++(0,-1) -- ++(-2,0) -- ++(0,-1) -- ++(-0,0) -- ++(0,-1) -- ++(-0,0) -- ++(0,-1) -- ++(-2,0) -- cycle;
\draw[line width=0.8] (0,0) -- (4,0) -- ++(0,-1) -- ++(-0,0) -- ++(0,-1) -- ++(-0,0) -- ++(0,-1) -- ++(-2,0) -- ++(0,-1) -- ++(-0,0) -- ++(0,-1) -- ++(-0,0) -- ++(0,-1) -- ++(-2,0);
\draw[line width=0.8] (0,0) -- (4,0) -- ++(0,-1) -- ++(-0,0) -- ++(0,-1) -- ++(-0,0) -- ++(0,-1) -- ++(-2,0) -- ++(0,-1) -- ++(-1,0) -- ++(0,-1) -- ++(-1,0);
\draw[line width=0.8] (0,0) -- (4,0) -- ++(0,-1) -- ++(-0,0) -- ++(0,-1) -- ++(-0,0) -- ++(0,-1) -- ++(-4,0);
\draw[line width=0.8] (0,0) -- (3,0) -- ++(0,-1) -- ++(-0,0) -- ++(0,-1) -- ++(-0,0) -- ++(0,-1) -- ++(-3,0);
\draw[line width=0.8] (0,0) -- (3,0) -- ++(0,-1) -- ++(-0,0) -- ++(0,-1) -- ++(-3,0);
\draw[line width=0.8] (0,0) -- (2,0) -- ++(0,-1) -- ++(-1,0) -- ++(0,-1) -- ++(-1,0);
\draw[line width=0.8] (0,0) -- (0,0);

\draw (4.5,-1.5) node{$7$};
\draw (0.5,-5.5) node{$6$};
\draw (0.5,-4.5) node{$\emphDes5$};
\draw (3.5,-2.5) node{$\emphDes4$};
\draw (0.5,-2.5) node{$3$};
\draw (1.5,-1.5) node{$\emphDes2$};
\draw (0.5,-1.5) node{$1$};
\end{tikzpicture}\]
\caption{The border strip tableau $B$ with the labels in the tails of the strips. The descents of $B$ are highlighted. We have $\DES(B)=\{2,4,5\}$.}
\label{fig:BSTdescents}
\end{figure}
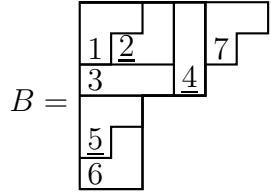

\begin{definition}
The \defin{major index} $\maj(T)$ of a standard Young diagram $T$ is the sum of its descents. We define the generating function
\[
f^{\lambda}(q,t)\coloneqq \sum_{T\in \SYT(\lambda)} q^{\maj(T)}t^{|\DES(T)|}.
\]
\end{definition}
Note that $f^{\lambda}(q,1)$ is also known as Lusztig's fake degree polynomial.
Also note that $f^{\lambda}(1,t)$ is the generating function for the number of descents on $\SYT(\lambda)$. 

Coming to our main definition, we introduce a new statistic on $\BST(\lambda, k)$.

\begin{definition}\label{def:stat}
Let $B\in \BST(\lambda, k)$, let $B^1$ be the strip in $B$ containing $1$ and define
\[
\stat(B) \coloneqq k\cdot |\DES(B)| + \height(B^1).
\]
\end{definition}

Observe that for $T\in \SYT(\lambda)=\BST(\lambda,1)$ this statistic equals the number of descents, that is $\stat(T)=|\DES(T)|$. For example, the tableau $B$ in \cref{fig:BST} has $\stat(B)=10$.

We can now state our main theorem.
\begin{theorem}\label{thm:mainThm}
Let $\lambda$ be a partition of $n$ with empty $k$-core and let $\xi$ be a primitive $k$-th root of unity. Then, for some $\epsilon_{\lambda,k}\in\{\pm1\}$,
\begin{equation}\label{eq:mainThm}
f^{\lambda}(\xi,t) = \epsilon_{\lambda,k} \cdot \sum_{B\in \BST(\lambda,k)}t^{\stat(B)}.
\end{equation}

\end{theorem}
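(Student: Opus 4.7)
The plan is to follow the three-step structure already outlined by the author: first use the Littlewood $k$-quotient to transfer the problem from a single shape $\lambda$ to a tuple of smaller shapes, then invoke an identity between Schur functions at roots of unity, and finally close the argument with a statistic-preserving bijection.

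In the first step (\cref{sec:partitionQuotient}), I would use the Littlewood correspondence to obtain, when $\lambda$ has empty $k$-core, a bijection
\[
\BST(\lambda,k) \;\cong\; \SYT(\lambda^{(0)}) \times \cdots \times \SYT(\lambda^{(k-1)}),
\]
where $(\lambda^{(0)},\dots,\lambda^{(k-1)})$ is the $k$-quotient of $\lambda$. This is the classical James--Kerber fact that makes \eqref{eq:fakedegreeEvaluation} cancellation-free and identifies $\epsilon_{\lambda,k}$ as the common parity of $\height(B)$ on $\BST(\lambda,k)$. The additional work here is to carefully track the two pieces $|\DES(B)|$ and $\height(B^1)$ entering $\stat(B)$ as one moves to the right-hand side, so that they reassemble into a natural ``descent'' statistic on the SYT-tuple.

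In the second step (\cref{sec:schur}), the idea is to view $f^\lambda(\xi,t)$ through symmetric functions. The quasisymmetric expansion
\[
s_\lambda(x_1,x_2,\dots) \;=\; \sum_{T\in \SYT(\lambda)} F_{\DES(T)}(x_1,x_2,\dots)
\]
recovers $f^\lambda(q,t)$ as an appropriate stable principal specialization weighted by an extra $t$-marker on $|\DES(T)|$. At the root-of-unity specialization, Littlewood's plethystic identity
\[
s_\lambda\bigl[X\cdot(1+\xi+\cdots+\xi^{k-1})\bigr] \;=\; \epsilon_{\lambda,k}\cdot\prod_{i=0}^{k-1} s_{\lambda^{(i)}}(X^k)
\]
converts the Schur function of $\lambda$ into a product of Schur functions of the quotient shapes. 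Re-expanding each factor on the right using the Haglund--Haiman--Loehr quasisymmetric expansion of products of Schurs (equivalently, LLT polynomials at $q=1$) recasts $f^\lambda(\xi,t)$, up to the sign $\epsilon_{\lambda,k}$, as a generating function over $\SYTT(\lambda^{(0)},\dots,\lambda^{(k-1)})$ weighted by an LLT-type descent statistic.

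The final step (\cref{sec:bijection}) is to reconcile this LLT-style generating function with $\sum_{B\in\BST(\lambda,k)} t^{\stat(B)}$. Via the bijection of Step 1 this reduces to verifying, on the SYT-tuple side, that the LLT descent statistic equals $k\cdot|\DES(B)|+\height(B^1)$. The factor $k$ is naturally explained by the $k$ copies of the alphabet introduced by the plethysm $1+\xi+\cdots+\xi^{k-1}$; the unusual boundary term $\height(B^1)$ looks like the ``price'' one pays for the particular choice of reading order of the tuples in the Littlewood correspondence. Pinning this down combinatorially, rather than seeing it appear only through generating-function manipulations, is where I expect the main difficulty to lie, and is precisely the point at which the connection to LLT polynomials noted in the abstract becomes essential.
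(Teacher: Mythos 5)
Your overall architecture matches the paper's: pass to the $k$-quotient, evaluate a Schur-function identity at the root of unity, and finish with a statistic-preserving bijection. However, there is a genuine gap in the middle step. The plethystic identity $s_\lambda\bigl[X\cdot(1+\xi+\cdots+\xi^{k-1})\bigr]=\epsilon_{\lambda,k}\prod_i s_{\lambda^{(i)}}[X^k]$ (Littlewood's factorization, or equivalently the LLT specialization at $q=1$) only tells you about specializations in which each $\xi^j$ appears the \emph{same} number of times -- that is, $s_\lambda(1,\xi,\dots,\xi^{m-1})$ for $m$ a multiple of $k$. The paper instead starts from Stanley's $P$-partition identity $f^\lambda(q,t)/(t;q)_{n+1}=\sum_{m\ge0}t^m s_\lambda(1,q,\dots,q^m)$, which requires the principal specialization in \emph{every} finite alphabet size $m=\ell k+r$ with $0\le r<k$. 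The crucial technical step -- \cref{thm:SchurEvaluation}, which extends Reiner--Stanton--White's $r=0$ factorization to arbitrary $r$ -- is exactly what produces the asymmetry between tableaux in positions $0,\dots,k-r-1$ (bounded by $\ell$) and those in positions $k-r,\dots,k-1$ (bounded by $\ell+1$), and hence the $\idx_1$ contribution. Your proposal has no mechanism for extracting this, and without it the term $\height(B^1)$ in $\stat(B)$ cannot be recovered from the generating function: you would only obtain $f^\lambda(\xi,t^k)$-type information modulo a $(1-t^k)$-power rather than the full $t$-grading.

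A second, smaller issue: you would also need an explicit bijection between the infinite index set on the symmetric-function side and the finite set $\SYTT(\Lambda)$; the paper does this via the map $\phi\colon C_{\ell-1}\times\SYTT(\Lambda)\to\SSYTT(\Lambda)$ and the factor $\tfrac{1}{(1-t^k)^{n/k-1}}$. Your phrase ``re-expanding each factor using the Haglund--Haiman--Loehr quasisymmetric expansion'' points in the right direction (and the paper explicitly remarks that its descent set agrees with the Bylund--Haiman/HHL content reading order), but as written it elides both the infinite-to-finite reduction and the $r$-refinement. In short: the three-step skeleton is right, and the LLT intuition about where $\height(B^1)$ comes from is correct, but the middle step needs the $r$-dependent Schur evaluation rather than the bare plethystic identity, and the final step needs the MacMahon-style bijection that absorbs the geometric series.
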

Furthermore, the sign $\epsilon_{\lambda,k}$ can be made explicit. 
\begin{proposition}\label{thm:sign}Provided that $\BST(\lambda, k)$ is non-empty, the map
\begin{align*}
\sgn:\BST(\lambda,k) &\to \{\pm1\}\\
B&\mapsto (-1)^{\height(B)}
\end{align*}
is constant, % on $\BST(\lambda,k)$
and $\epsilon_{\lambda,k}= \sgn(B_0)$, where $B_0$ is any border strip tableau in $\BST(\lambda,k)$.
\end{proposition}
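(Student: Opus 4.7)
The plan is to prove that $\height(B) \bmod 2$ is invariant as $B$ ranges over $\BST(\lambda,k)$, whence the sign function is constant. I would proceed via the abacus/Littlewood quotient construction that the paper develops in Section~\ref{sec:partitionQuotient}.

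Recall that $\lambda$ can be encoded by a bead configuration on a $k$-runner abacus, and that removing a border strip of size $k$ from a partition $\mu$ corresponds to sliding a single bead on one of the runners up by one slot (into a vacated position). Under this dictionary, $\height(B^i)$ equals the number of beads that the $i$-th bead move jumps over; that is, the number of occupied lattice positions lying strictly between the bead's old and new positions. Thus $\height(B)$ equals the total number of bead--over--bead crossings along the sequence of moves that brings the bead configuration of $\lambda$ down to that of its empty $k$-core.

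Next I would show that this total count depends, modulo $2$, only on the initial and final bead configurations, hence only on $\lambda$ and $k$. Fix any reference BST $B_0 \in \BST(\lambda,k)$; any other $B \in \BST(\lambda,k)$ gives a sequence of bead moves reaching the same final configuration, so the sequences of moves in $B$ and $B_0$ differ by a composition of \emph{local swaps} exchanging two consecutive moves. For two consecutive moves on the same runner, both moves and their jump counts are unchanged by the swap. For two consecutive moves on different runners, direct inspection of the four possible interactions (neither bead crosses the other, both cross each other, or only one crosses) shows that the combined jump count changes by $0$ or $\pm 2$. Either way the parity of $\height(B)$ is preserved, so $\sgn(B) = \sgn(B_0) =: \epsilon_{\lambda,k}$.

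The main obstacle is precisely the local-swap analysis on different runners, since a careful accounting of the mutual positions of the two moving beads is needed. A cleaner formulation encodes $\height(B) \bmod 2$ as the sign of the permutation that sorts the bead positions of $\lambda$ onto those of its $k$-core; since the sign of a permutation is independent of the factorization into transpositions, the invariance follows without case analysis. This is essentially the content of \cite[Theorem~2.7.27]{James1984}, and one could equally well conclude the proposition by direct citation. A self-contained proof via the tools of Section~\ref{sec:partitionQuotient} has the advantage of making the sign $\epsilon_{\lambda,k}$ explicit in terms of the abacus of $\lambda$.
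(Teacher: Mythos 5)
Your approach is correct and is, in spirit, the same as the paper's: the paper gives no separate proof of this proposition and instead relies on James and Kerber \cite[Theorem 2.7.27]{James1984}, cited in the introduction, which is precisely the abacus argument you spell out (removing a border strip of size $k$ is a one-step bead move on the $k$-runner abacus, with height equal to the number of beads jumped over). Two places in your sketch would need filling in for a genuinely self-contained argument. First, the local-swap reduction implicitly assumes that any two valid sequences of $k$-strip removals taking $\lambda$ to its $k$-core are connected by adjacent exchanges of commuting moves; this needs a short bubble-sort argument that you do not state. Second, the ``sign of a permutation'' reformulation is not literally a factorization into transpositions, since a single bead move may cross several beads at once; one must still argue, using that every bead's trajectory is monotone decreasing, that a given pair of beads crosses an odd number of times precisely when its relative order at the end is reversed (beads on the same runner never cross, beads on different runners cross once per overtaking). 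Both points are routine, and --- as you observe --- can be sidestepped by citing James and Kerber outright, which is exactly what the paper does.
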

\begin{remark}
Let $\chi^{\lambda}$ be the irreducible character of the symmetric group corresponding to $\lambda$ and let $\rho=(k^{n/k})$ be the rectangular partition of $n$ with all parts equal to $k$. 
Then $\epsilon_{\lambda,k}$ is the sign of the character value $\chi^\lambda(\rho)$. 
\end{remark}

\begin{example} To illustrate \cref{thm:mainThm}  let us consider the partition $\lambda=222$.
There are five standard Young tableaux, as depicted below. For each tableau we highlight the descents and calculate its weight in $f^{222}(q,t)$.
\ytableausetup{boxsize=1.2em}
\[
\begin{array}{ccccccccc}
\ytableaushort{{\emphDes1}{\emphDes4},{\emphDes2}{\emphDes5},36} && \ytableaushort{{\emphDes1}{\emphDes3},2{\emphDes5},46} && \ytableaushort{1{\emphDes2},{\emphDes3}{\emphDes5},46} && \ytableaushort{{\emphDes1}{\emphDes3},2{\emphDes4},56} && \ytableaushort{1{\emphDes2},3{\emphDes4},56}\vspace{0.5em}\\
q^{12}t^4 &+& q^{9}t^3 &+& q^{10}t^3 &+& q^{8}t^3 &+& q^{6}t^2
\end{array}
\]
We obtain
\[
f^{222}(q,t) = q^{12}t^4 + (q^{10} + q^{9} + q^{8})t^3 + q^{6}t^2.
\]
Substituting primitive roots of unity for $q$ we obtain the following polynomials
\[\begin{tabular}{c|c|c|c|c}
primitive root& $1^{st}=1$ & $2^{nd}=-1$ & $3^{rd}$ & $6^{th}$\\\hline
$f^{222}(\cdot,t)$  & {$t^{4} + 3 t^{3} + t^{2}$}  & {$t^{4} + t^{3} + t^{2}$} & {$t^{4} + t^{2}$} & {$\underbrace{t^{4} -2 t^{3} + t^{2}}_{\text{$6$-core not empty}}$}.
\end{tabular}
\]
For $q=1$ we get the generating function for the number of descents on $\SYT(\lambda)=\BST(\lambda,1)$. For $q$ being a second or third root of unity we obtain the generating functions for $\stat$ on $\BST(\lambda,2)$ and $\BST(\lambda,3)$ respectively:
\[
\BST(\lambda,2):
\begin{array}{ccc}
\begin{tikzpicture}[x=1.2em, y=1.2em]
\draw (0,0) -- (2,0) -- ++(0,-1) -- ++(-0,0) -- ++(0,-1) -- ++(-0,0) -- ++(0,-1) -- ++(-2,0) -- cycle;
\draw (0,0) -- (2,0) -- ++(0,-1) -- ++(-0,0) -- ++(0,-1) -- ++(-2,0) -- cycle;
\draw (0,0) -- (2,0) -- ++(0,-1) -- ++(-2,0) -- cycle;
\draw (0,0) -- (0,0) -- cycle;
\draw (0.5,-2.5) node{$3$};
\draw (0.5,-1.5) node{$\emphDes2$};
\draw (0.5,-0.5) node{$\emphDes1$};
\end{tikzpicture}&
\begin{tikzpicture}[x=1.2em, y=1.2em]
\draw (0,0) -- (2,0) -- ++(0,-1) -- ++(-0,0) -- ++(0,-1) -- ++(-0,0) -- ++(0,-1) -- ++(-2,0) -- cycle;
\draw (0,0) -- (2,0) -- ++(0,-1) -- ++(-1,0) -- ++(0,-1) -- ++(-0,0) -- ++(0,-1) -- ++(-1,0) -- cycle;
\draw (0,0) -- (2,0) -- ++(0,-1) -- ++(-2,0) -- cycle;
\draw (0,0) -- (0,0) -- cycle;
\draw (1.5,-2.5) node{$3$};
\draw (0.5,-2.5) node{$2$};
\draw (0.5,-0.5) node{$\emphDes1$};
\end{tikzpicture}&
\begin{tikzpicture}[x=1.2em, y=1.2em]
\draw (0,0) -- (2,0) -- ++(0,-1) -- ++(-0,0) -- ++(0,-1) -- ++(-0,0) -- ++(0,-1) -- ++(-2,0) -- cycle;
\draw (0,0) -- (2,0) -- ++(0,-1) -- ++(-0,0) -- ++(0,-1) -- ++(-2,0) -- cycle;
\draw (0,0) -- (1,0) -- ++(0,-1) -- ++(-0,0) -- ++(0,-1) -- ++(-1,0) -- cycle;
\draw (0,0) -- (0,0) -- cycle;
\draw (0.5,-2.5) node{$3$};
\draw (1.5,-1.5) node{$\emphDes2$};
\draw (0.5,-1.5) node{$1$};
\end{tikzpicture}\\
t^{2\cdot 2+0} & t^{2\cdot 1+0} & t^{2\cdot 1+1}
\end{array}
\qquad\qquad
\BST(\lambda, 3):
\begin{array}{cc}
\begin{tikzpicture}[x=1.2em, y=1.2em]
\draw (0,0) -- (2,0) -- ++(0,-1) -- ++(-0,0) -- ++(0,-1) -- ++(-0,0) -- ++(0,-1) -- ++(-2,0) -- cycle;
\draw (0,0) -- (1,0) -- ++(0,-1) -- ++(-0,0) -- ++(0,-1) -- ++(-0,0) -- ++(0,-1) -- ++(-1,0) -- cycle;
\draw (0,0) -- (0,0) -- cycle;
\draw (1.5,-2.5) node{$2$};
\draw (0.5,-2.5) node{$1$};
\end{tikzpicture}&
\begin{tikzpicture}[x=1.2em, y=1.2em]
\draw (0,0) -- (2,0) -- ++(0,-1) -- ++(-0,0) -- ++(0,-1) -- ++(-0,0) -- ++(0,-1) -- ++(-2,0) -- cycle;
\draw (0,0) -- (2,0) -- ++(0,-1) -- ++(-1,0) -- ++(0,-1) -- ++(-1,0) -- cycle;
\draw (0,0) -- (0,0) -- cycle;
\draw (0.5,-2.5) node{$2$};
\draw (0.5,-1.5) node{$\emphDes1$};
\end{tikzpicture}\\
t^{3\cdot 0+2} & t^{3\cdot 1+1}
\end{array}
\]
The signs $\epsilon_{\lambda,2}$ and $\epsilon_{\lambda,3}$ are positive, as all border strip tableaux in this example have even height.
For a primitive sixth root of unity we obtain a polynomial in which the signs of the nonzero coefficients do not coincide. Since the $6$-core of $\lambda=222$ is not empty, we do not have a combinatorial interpretation of $f^\lambda(\xi, t)$ in this case.
\end{example}

In the following sections we present the proof. The crucial steps are as follows: first, we use the Littlewood quotient map to bijectively map border strip tableaux to standard Young tableau tuples, and apply this to the right hand side of \cref{eq:mainThm}.
Next we express $f^\lambda(q,t)$ in terms of principal specialisations of the Schur function $s_\lambda$. For a primitive root of unity $\xi$, a generalisation of a theorem by V. Reiner, D. Stanton and D. White allows us to regard $f^\lambda(\xi,t)$ as generating function over the set of tuples of semistandard Young tableaux. Finally, a bijection links the two resulting expressions.

\section{The Littlewood quotient map}\label{sec:partitionQuotient}
We start this section by introducing the $k$-quotient of a partition $\lambda$ using a graphical description.

Interpret the lower-right contour of the Young diagram $\lambda$ as path with vertical and horizontal steps and append empty rows (vertical steps) to the bottom, such that the total number of rows is divisible by $k$.
Starting with the leftmost step in the lowest row, we label each step of the path in incremental order, starting with $0$.
For $0\le s\le k-1$ let $\lambda^s$ be the partition one obtains from the steps whose label is congruent to $s$ modulo $k$. 
The tuple $(\lambda^0,\lambda^1,\dots,\lambda^{k-1})$ is called the \defin{$k$-quotient} of $\lambda$.

\begin{example}
We construct the $3$-quotient of $\lambda=654222$.
For a nicer display we put the labels for the horizontal steps on top of each step and the labels for each vertical step to its right. As the number of rows is already divisible by $3$, we do not append empty rows.
\[
\begin{tikzpicture}[x=1.2em, y=1.2em,baseline={([yshift=-1ex]current bounding box.center)}]
\draw[line width=1.2] (6,0) -- ++(0,-1) -- ++(-1,0) -- ++(0,-1) -- ++(-1,0) -- ++(0,-1) -- ++(-2,0) -- ++(0,-1) -- ++(-0,0) -- ++(0,-1) -- ++(-0,0) -- ++(0,-1) -- ++(-2,0);

\draw[line width=0.8,dotted] (6,0) -- (0,0) -- (0,-6);

\draw (0.5, -5.5) node{$0$} ++(1,0) node{$1$} ++(1,0) node{$2$} ++(0,1) node{$3$} ++(0,1) node{$4$} ++(0,1) node{$5$} ++(1,0) node{$6$} ++(1,0) node{$7$} ++(0,1) node{$8$} ++(1,0) node{$9$} ++(0,1) node{$10$} ++(1,0) node{$11$}; 
\end{tikzpicture}
\mapsto
\begin{array}{ccc}
\begin{tikzpicture}[x=1.2em, y=1.2em,baseline={([yshift=-1ex]current bounding box.center)}]
\draw[line width=1.2] (2,0) -- ++(0,-1) -- ++(-1,0) -- ++(0,-1) -- ++(-1,0);
\draw[line width=0.8,dotted] (2,0) -- (0,0) -- (0,-2);
\draw (0.5, -1.5) node{$0$} ++(1,0) node{$3$}  ++(0,1) node{$6$} ++(1,0) node{$9$}; 
\draw (0,1) node{}; \draw (0,-3) node{};
\end{tikzpicture}&
\begin{tikzpicture}[x=1.2em, y=1.2em,baseline={([yshift=-1ex]current bounding box.center)}]
\draw[line width=1.2] (2,0) -- ++(-1,0) -- ++(0,-2) -- ++(-1,0);
\draw[line width=0.8,dotted] (2,0) -- (0,0) -- (0,-2);
\draw (0.5, -1.5) node{$1$} ++(1,0) node{$4$}  ++(0,1) node{$7$} ++(0,1) node{$10$}; 
\draw (0,1) node{}; \draw (0,-3) node{};
\end{tikzpicture}&
\begin{tikzpicture}[x=1.2em, y=1.2em,baseline={([yshift=-1ex]current bounding box.center)}]
\draw[line width=1.2] (2,0) -- ++(0,-1) -- ++(-2,0) -- ++(0,-1);
\draw[line width=0.8,dotted] (2,0) -- (0,0) -- (0,-2);
\draw (0.5, -1.5) node{$2$} ++(0,1) node{$5$}  ++(1,0) node{$8$} ++(1,0) node{$11$};
\draw (0,1) node{}; \draw (0,-3) node{};
\end{tikzpicture}
\end{array}
\]
We obtain the $3$-quotient $(21,11,2)$.
\end{example}

Some fundamental properties of the quotient are the following.
\begin{proposition}[{G. James \& A. Kerber \cite[Chapter 2.7]{James1984}}]\label{thm:quotient}
\begin{enumerate}[(i)]
\item The function that maps a partition $\lambda\vdash n$ to its $k$-quotient is a bijection between the set of partitions with empty $k$-core and the set of $k$-tuples of partitions $(\lambda^0,\lambda^1,\dots,\lambda^{k-1})$ with $|\lambda^0|+|\lambda^1|+\dots+|\lambda^{k-1}| = \frac{n}{k}$.
\item Let $\lambda$ and $\mu$ be two partitions with empty $k$-core such that $\lambda/\mu$ is a border strip of size $k$.
Let $(\lambda^0,\dots, \lambda^{k-1})$ and $(\mu^0,\dots, \mu^{k-1})$ be the corresponding $k$-quotients for $\lambda$ and $\mu$, respectively.
Then there exists an index $0\le s\le k-1$ such that $|\lambda^s/\mu^s|=1$ and $\lambda^t = \mu^t$ for all $t\neq s$.

Alternatively, the $k$-quotient of $\lambda$ can be obtained from the $k$-quotient of $\mu$ by adding a single cell to one of the partitions.\label{thm:quotient_strip}
\end{enumerate}
\end{proposition}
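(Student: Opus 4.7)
The plan is to derive both claims from the boundary-word (equivalently, abacus) description of a partition, which is essentially what the paper already sets up to define the quotient. Encode a partition $\lambda$ by a binary word $w(\lambda)$ whose letters record horizontal ($0$) and vertical ($1$) boundary steps from bottom-left to upper-right, padded with an initial run of $1$'s and a terminal run of $0$'s so that the active window has length divisible by $k$. Taking the subword of positions congruent to $s$ modulo $k$ then yields the edge sequence of $\lambda^s$ by the very construction recalled above.

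The central lemma I would prove first is that border-strip removals correspond to transpositions in $w(\lambda)$: the relation ``$\lambda/\mu$ is a border strip of size $k$'' holds precisely when $w(\mu)$ is obtained from $w(\lambda)$ by choosing an index $j$ with $w(\lambda)_j = 0$ and $w(\lambda)_{j+k} = 1$ and swapping those two letters. One direction is a walk along the boundary of the strip: the $0$ at position $j$ is the horizontal step immediately below the head of the strip, the $1$ at position $j+k$ is the vertical step immediately to the right of its tail, and the length-$k$ gap between them accounts for the $k$ cells of the strip. The converse is immediate from reading off the cells produced by the swap.

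Given this lemma, part (ii) is essentially automatic: the positions $j$ and $j+k$ lie in the same residue class $s \bmod k$, so only the subword at residue $s$ is altered, and within that subword the swap becomes a $0 \leftrightarrow 1$ exchange of two adjacent letters, which corresponds to adding or removing a single cell in $\lambda^s$, while $\lambda^t = \mu^t$ for $t \neq s$. For part (i), the inverse map sends a tuple $(\lambda^0, \ldots, \lambda^{k-1})$ to the word obtained by interleaving their edge sequences according to residue; this is bijective onto partitions with empty $k$-core, since ``empty $k$-core'' means precisely that no further $(0,1)$ pair at distance $k$ remains, i.e.\ that each residue-$s$ subword is already of the form ``all $1$'s, then all $0$'s,'' which is exactly the condition that it is the edge sequence of a partition. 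The size identity $|\lambda^0| + \dots + |\lambda^{k-1}| = n/k$ then follows by iterating (ii) down to the empty $k$-core: each strip removal decreases $|\lambda|$ by $k$ and decreases $\sum_s |\lambda^s|$ by $1$.

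The main obstacle is the orientation bookkeeping in the central lemma, because the conventions for content, for which cell of a strip is its head versus its tail, and for the direction in which the boundary is read must all agree in order to make the statement ``the $0$ at $j$ and the $1$ at $j+k$ bound exactly a border strip of size $k$'' literally correct. Once this bookkeeping is pinned down — best done by a small explicit picture as in \cref{fig:partition} — the rest of the argument is a formal manipulation of binary words with no further geometric input required.
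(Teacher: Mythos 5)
The paper does not prove this proposition; it is quoted directly from James and Kerber, so there is no internal argument to compare your sketch against. Your boundary-word (abacus) approach is the standard route to these facts, and the central lemma as you state it is correct in your convention: removing a border strip of size $k$ is a swap of a $0$ at some index $j$ with a $1$ at index $j+k$, and part (ii) then follows exactly as you say, since both indices lie on the same runner and the swap is an adjacent $01\leftrightarrow 10$ exchange there. One terminological slip worth flagging: with the paper's definition of \emph{tail} as the cell of smallest content, the $0$ at position $j$ is the horizontal step under the \emph{tail} and the $1$ at position $j+k$ is the vertical step to the right of the \emph{head} — you have them swapped. You explicitly anticipate orientation bookkeeping as the obstacle, so this is not a substantive problem.

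Your argument for part (i), however, has a genuine gap. You write that ``empty $k$-core'' is equivalent to each residue-$s$ subword already having the form ``all $1$'s then all $0$'s.'' That condition characterizes $\lambda$ \emph{being} a $k$-core (no strip of size $k$ is removable at all), which is equivalent to the $k$-quotient being a tuple of \emph{empty} partitions — essentially the opposite of the situation in part (i). Moreover, ``being the edge sequence of a partition'' is not a constraint: every $\{0,1\}$-word whose letters eventually stabilize is a partition boundary, regardless of pattern. The actual content needed for part (i) is that the pair ($k$-core, $k$-quotient) determines $\lambda$, so that fixing the core to be empty makes the quotient map a bijection. In the finite-window setup this means your interleaving inverse must pad all $\lambda^s$ to a common number $\ell'$ of rows and interleave into a word with $\ell'$ vertical steps on each runner, and one must verify that the resulting partition has empty $k$-core (the runner ``charges'' are balanced) and, conversely, that every such partition arises this way. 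Your concluding observation — iterate part (ii) down to the empty partition to obtain $\sum_s|\lambda^s|=n/k$ — is correct and is the right way to establish the size constraint.
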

In the following we denote multi-sets with two curly brackets, e.g. $\{\{a,a,b\}\}$, and for a multi-set $X$ and a real number $k$ we use the notions $X+k = \{\{x+k:x\in X\}\}$ and $kX= \{\{k\cdot x:x\in X\}\}$.
\begin{proposition}[I. Macdonald {\cite[Example I.1.8(d)]{Macdonald1995}}]\label{prop:hookvalues-quotient}
Denote with $h(\lambda)\coloneqq\{\{h(x): x\in\lambda\}\}$ the multi-set of the hook values of all cells of a partition $\lambda$ and denote with $c_{0,k}(\lambda)\coloneqq\{\{h(x): x\in\lambda, k\mid h(x)\}\}$ the multi-set of hook values, that are divisible by $k$.

Let $\lambda$ be a partition with empty $k$-core and $(\lambda^0,\dots, \lambda^{k-1})$ its $k$-quotient, then

\[
\frac{1}{k} h_{0,k}(\lambda) = \bigcup_{0\le i<k} h(\lambda^i).
\]
\end{proposition}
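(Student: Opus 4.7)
The plan is to exploit the boundary-path encoding of partitions that is already in place for the definition of the $k$-quotient. Label the steps of the (padded) lower-right boundary path of $\lambda$ by $0,1,2,\ldots$ as in the paper's construction, and let $H,V\subseteq\mathbb{Z}_{\ge 0}$ be the labels of the horizontal and vertical steps, respectively.

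The central classical fact I would invoke is the bijection
\[
\{\text{cells of }\lambda\}\;\longleftrightarrow\;\bigl\{(h,v)\in H\times V\,:\, h<v\bigr\},
\]
under which the cell corresponding to $(h,v)$ has hook length exactly $v-h$. This is essentially the identity ``arm $+$ leg $+$ $1$'' translated into the language of the boundary path: the horizontal step labels strictly between $h$ and $v$ enumerate the arm of the cell, the vertical step labels strictly between $h$ and $v$ enumerate the leg, and the cell itself accounts for the $+1$.

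By the very definition of the $k$-quotient, writing $H^{(s)}:=H\cap(s+k\mathbb{Z})$ and $V^{(s)}:=V\cap(s+k\mathbb{Z})$, the boundary path of $\lambda^s$ is obtained by listing the labels in $H^{(s)}\cup V^{(s)}$ in increasing order and then dividing them by $k$. Applying the bijection above to each $\lambda^s$ in turn, its cells correspond to pairs $(h,v)\in H^{(s)}\times V^{(s)}$ with $h<v$, with hook length $(v-h)/k$. Now a pair $(h,v)\in H\times V$ satisfies $k\mid (v-h)$ if and only if $h\equiv v\pmod k$, that is, if and only if $(h,v)$ lies in $H^{(s)}\times V^{(s)}$ for some $s$. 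Therefore, the disjoint union over $s$ of these subsets enumerates precisely the cells of $\lambda$ whose hook length is divisible by $k$, and after passing to $(v-h)/k$ this yields the asserted multi-set identity.

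The main obstacle, and really the only work, is verifying the underlying bijection together with the formula $v-h$ for the hook length; this requires care with the labelling conventions and with the padding of empty rows at the bottom (the new vertical steps receive the smallest labels, so they cannot sit above any horizontal step label and hence contribute no spurious pair with $h<v$, while a uniform shift of all labels leaves every difference $v-h$ untouched). These verifications are routine and could equally well be black-boxed from the abacus / beta-numbers formalism in Macdonald I.1, from which the statement is in fact stated as an exercise.
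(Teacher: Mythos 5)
Your proof is correct. The paper does not supply its own argument for this proposition --- it cites it to Macdonald (Example~I.1.8(d)) --- so the relevant comparison is with the standard proof, and you have reproduced exactly that: the beta-number/abacus encoding, the bijection sending a cell to the pair $(h,v)$ with $h$ a horizontal step label, $v$ a vertical step label, $h<v$, and hook length $v-h$, and the observation that $k\mid(v-h)$ iff $h\equiv v\pmod k$, so that restricting to a single residue class $s$ recovers the cells of $\lambda^s$ with hook lengths scaled by $k$. Your remark about padding is also the right thing to say: appended empty rows only insert vertical steps at the smallest labels, which cannot form a pair $(h,v)$ with $h<v$, and the global shift of labels is immaterial since only differences enter. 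One presentational note: the statement in the paper contains a typo ($c_{0,k}(\lambda)$ where $h_{0,k}(\lambda)$ is meant), which you correctly read through without comment.
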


\begin{proposition}\label{prop:contents-quotient}
Denote with $c(\lambda)\coloneqq\{\{c(x): x\in\lambda\}\}$ the multi-set of the contents of all cells of a partition $\lambda$.
For $0\le i < k$ denote with $c_{r,k}(\lambda)\coloneqq\{\{c(x): x\in\lambda, k\mid (c(x)+r)\}\}$ the multi-set of contents, that are congruent to $-r$ modulo $k$.

Let $\lambda$ be a partition with empty $k$-core and $(\lambda^0,\dots, \lambda^{k-1})$ its $k$-quotient, then for $0\le r < k$

\[
\frac{1}{k}\left(c_{r,k}(\lambda)+r\right) = \bigcup_{0\le i<k-r} c(\lambda^i) \cup \bigcup_{k-r\le i<k}(c(\lambda^i)+1).
\]
\end{proposition}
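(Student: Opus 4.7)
The plan is to prove the identity by induction on $|\lambda|$, with the trivial base case $\lambda = \emptyset$. For the inductive step, pick any border strip $B$ of size $k$ with $\lambda / \mu = B$; since the $k$-core is invariant under removing $k$-strips, $\mu$ also has empty $k$-core and the formula holds for $\mu$ by induction. It then suffices to verify that both sides of the desired equation change by the same multi-set when passing from $\mu$ to $\lambda$.

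Since $B$ is a border strip of size $k$, its cells have $k$ consecutive contents $c_0, c_0+1, \ldots, c_0+k-1$, where $c_0$ is the tail content. Hence, for each residue $r \in \{0, \ldots, k-1\}$, exactly one cell of $B$ contributes to $c_{r,k}(\lambda)$; setting $a := (-c_0) \bmod k$, a direct computation shows that the multi-set $\frac{1}{k}(c_{r,k}(\lambda)+r)$ gains the element $(c_0+a)/k$ if $r \le a$, and $(c_0+a)/k + 1$ if $r > a$. On the quotient side, part (ii) of \cref{thm:quotient} tells us that $\lambda^s = \mu^s \cup \{\text{new cell}\}$ for a unique index $s$ and $\lambda^t = \mu^t$ otherwise; writing $c'$ for the content of the new cell, the right-hand multi-set for residue $r$ gains $c'$ if $s < k - r$ and $c' + 1$ otherwise. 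Matching these patterns across all $r$ reduces the inductive step to the two identities $s \equiv c_0 - 1 \pmod{k}$ and $c' = (c_0 + k - 1 - s)/k$.

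These identities can be verified by tracking the contour description of the $k$-quotient. First pad both $\mu$ and $\lambda$ with empty rows so that the total number of rows $L$ is a multiple of $k$; adding the border strip $B$ then amounts to swapping a vertical step at some position $p$ in the contour of $\mu$ with a horizontal step at position $p + k$. A direct calculation on the contour shows that the horizontal step at position $p$ in $\lambda$ is the bottom edge of the tail cell of $B$, so $c_0 = p - L + 1$. Since $k \mid L$, this yields $s = p \bmod k \equiv c_0 - 1 \pmod{k}$, as claimed.

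The main obstacle is the identification of $c'$, which requires knowing that each of the $k$ families of labeled steps contains exactly $L/k$ vertical steps. Because the $k$-core is empty, the bead configuration on each ``runner'' slides down to fill the initial segment $\{0, 1, \ldots, L-1\}$ corresponding to the empty partition; when $L$ is a multiple of $k$ this forces exactly $L/k$ vertical steps per residue class. On residue class $s$, the single new vertical step at rescaled position $(p - s)/k + 1$ adds a cell to $\mu^s$ of content $(p - s)/k + 1 - L/k = (c_0 + k - 1 - s)/k$, matching the second identity and closing the induction.
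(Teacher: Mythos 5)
Your proposal is correct, but it takes a genuinely different route from the paper.

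The paper's proof is a \emph{reduction to the known case $r=0$} (Macdonald, \cite[Example~I.1.8(d)]{Macdonald1995}): it adds the rectangle $\mu=(r^\ell)$ to $\lambda$ and uses the two shift relations \eqref{eq:shiftet_contents} and \eqref{eq:shiftet_quotients} to translate the general statement into the $r=0$ statement for $\lambda+\mu$. Your proof instead is an \emph{induction on $|\lambda|$}: peel off one border strip of size $k$, observe that exactly one element is added to each side of the multi-set identity for each residue $r$, and verify the two local identities $s\equiv c_0-1\pmod k$ and $c'=(c_0+k-1-s)/k$ directly on the contour. I checked the details: the $k$ contents of a border strip are consecutive, so each residue class gains exactly one element on the left-hand side, and the threshold $a=(-c_0)\bmod k$ matches $k-1-s$ once one records that the contour change at positions $p$ and $p+k$ lives entirely in runner $s=p\bmod k$ and that the new cell's content in $\lambda^s$ is $(p-s)/k+1-L/k$. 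Your argument is more self-contained (it does not invoke the $r=0$ result as a black box), at the cost of spelling out more abacus bookkeeping; you do rightly flag and justify the one nontrivial ingredient, namely that an empty $k$-core forces each runner to carry exactly $L/k$ vertical steps when $L$ is a multiple of $k$, which is what pins down the number of (padded) rows of each $\lambda^s$ and hence the content of the new cell. Both proofs are correct; yours trades the external citation for a longer but elementary local computation.
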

\begin{proof}
For $r=0$ this result is known, see \cite[Example I.1.8(d)]{Macdonald1995}. Let $\lambda = (\lambda_1,\dots,\lambda_\ell)$ be the partition with as many zero parts appended, such that $k\mid \ell$ and let $\ell/k = \ell'$.

Let $\mu=(r^\ell)$ the partition consisting out of $\ell$ parts equal to $r$ and let $\lambda+\mu$ be the partition obtained from $\lambda$ and $\mu$ by pairwise addition of the parts.

We have
\begin{equation}\label{eq:shiftet_contents}
c_{r,k}(\lambda)+r = c_{0,k}(\mu+\lambda)\setminus c_{0,k}(\mu).
\end{equation}

Let further be $(\kappa^0,\dots,\kappa^{k-1})$ the $k$-quotient of $\lambda+\mu$, then
\begin{equation}\label{eq:shiftet_quotients}
\kappa^i = \begin{cases}
\lambda^{i-r}&\text{if $i\ge r$}\\
\lambda^{k+i-r}+(1^{\ell'})&\text{if $i<r$}
\end{cases}
\end{equation}

The general case can now be concluded from the case $r=0$ using equations \eqref{eq:shiftet_contents} and \eqref{eq:shiftet_quotients}.
\end{proof}

Let $\Lambda = (\lambda^0,\dots, \lambda^{k-1})$ be a tuple of Young diagrams and let $|\Lambda|$ be the total number of cells in $\Lambda$. A \defin{standard Young tableau tuple} of shapes $\Lambda$ is a bijective filling of the cells of $\Lambda$ with the values $\{1,\dots,|\Lambda|\}$ such that entries in each diagram strictly increase along rows from left to right and along columns from top to bottom. We denote the set of all such fillings with $\SYTT(\Lambda)$.

There exists a bijection between $\BST(\lambda, k)$ and $\SYTT(\Lambda)$, where $\Lambda$ is the $k$-quotient of $\lambda$.
Regarding a border strip tableau $B$ of shape $\lambda$ as a flag of partitions $\emptyset = \nu_0 \subset \nu_1 \subset \dots \subset \nu_{n/k} = \lambda$, we denote with $\Lambda_i$ the $k$-quotient of $\nu_i$. By \cref{thm:quotient} $(\emptyset,\dots,\emptyset)=\Lambda_0 \subset \Lambda_1 \subset \dots \subset \Lambda_{n/k}= \Lambda$ is a flag of tuples of Young diagrams, such that two consecutive tuples differ by precisely one cell. For $1\le i \le n/k$ denote with $x_i$ the unique cell in $\Lambda$ which is contained in $\Lambda_i$ but not $\Lambda_{i-1}$. By filling $x_i$ with $i$, we obtain a standard Young tableau tuple of shapes $\Lambda$.

This bijection is called \defin{Littlewood quotient map} by J. Haglund \cite{Haglund2007} or rim hook bijection by I. Pak \cite{Pak2000}. An example is given in \cref{fig:BSTtoSYTT}.

\begin{figure}
\centering

\[B = 
\begin{tikzpicture}[x=1em, y=1em,baseline={([yshift=-1ex]current bounding box.center)}]
\draw[line width=0.8] (0,0) -- (6,0) -- ++(0,-1) -- ++(-1,0) -- ++(0,-1) -- ++(-1,0) -- ++(0,-1) -- ++(-2,0) -- ++(0,-1) -- ++(-0,0) -- ++(0,-1) -- ++(-0,0) -- ++(0,-1) -- ++(-2,0) -- cycle;
\draw[line width=0.8] (0,0) -- (4,0) -- ++(0,-1) -- ++(-0,0) -- ++(0,-1) -- ++(-0,0) -- ++(0,-1) -- ++(-2,0) -- ++(0,-1) -- ++(-0,0) -- ++(0,-1) -- ++(-0,0) -- ++(0,-1) -- ++(-2,0);
\draw[line width=0.8] (0,0) -- (4,0) -- ++(0,-1) -- ++(-0,0) -- ++(0,-1) -- ++(-0,0) -- ++(0,-1) -- ++(-2,0) -- ++(0,-1) -- ++(-1,0) -- ++(0,-1) -- ++(-1,0);
\draw[line width=0.8] (0,0) -- (4,0) -- ++(0,-1) -- ++(-0,0) -- ++(0,-1) -- ++(-0,0) -- ++(0,-1) -- ++(-4,0);
\draw[line width=0.8] (0,0) -- (3,0) -- ++(0,-1) -- ++(-0,0) -- ++(0,-1) -- ++(-0,0) -- ++(0,-1) -- ++(-3,0);
\draw[line width=0.8] (0,0) -- (3,0) -- ++(0,-1) -- ++(-0,0) -- ++(0,-1) -- ++(-3,0);
\draw[line width=0.8] (0,0) -- (2,0) -- ++(0,-1) -- ++(-1,0) -- ++(0,-1) -- ++(-1,0);
\draw[line width=0.8] (0,0) -- (0,0);

\draw (4.5,-1.5) node{$7$};
\draw (0.5,-5.5) node{$6$};
\draw (0.5,-4.5) node{$\emphDes5$};
\draw (3.5,-2.5) node{$\emphDes4$};
\draw (0.5,-2.5) node{$3$};
\draw (1.5,-1.5) node{$\emphDes2$};
\draw (0.5,-1.5) node{$1$};
\end{tikzpicture}\quad \leftrightarrow \quad 
\begin{array}{lll}
\ytableaushort{3{\emphDes4},6}&\ytableaushort{1,{\emphDes5}}&\ytableaushort{{\emphDes2}7}
\end{array}=\mathcal{T}\]
\resizebox{\textwidth}{!} 
{$\begin{array}{cccccccccccccccc}
\text{\large $B$ as flag:}&\emptyset &\subset &21& \subset &33&  \subset& 333& \subset& 444& \subset& 44421& \subset& 444222 &\subset &654222\\
&\updownarrow&&\updownarrow&&\updownarrow&&\updownarrow&&\updownarrow&&\updownarrow&&\updownarrow&&\updownarrow
\\
\text{\large $\mathcal{T}$ as flag:}&(\emptyset,\emptyset,\emptyset) &\subset & (\emptyset,1,\emptyset)& \subset & (\emptyset,1,1) & \subset & (1,1,1) & \subset & (2,1,1)& \subset & (2,11,1)& \subset& (21,11,1)& \subset& (21,11,2)
\end{array}$}
\caption{The Littlewood quotient map applied to $B$. The descents of $B$ and $\mathcal{T}$ are highlighted. We have $\DES(B)=\DES(\mathcal{T})=\{2,4,5\}$.}
\label{fig:BSTtoSYTT}
\end{figure}
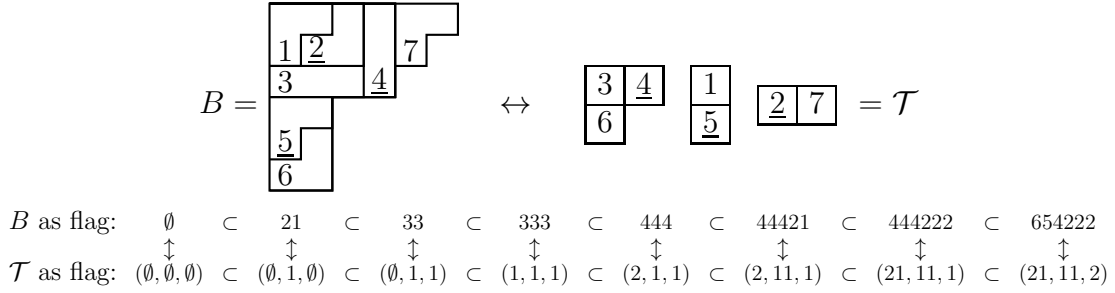

We now describe how descents are transported via this bijection.

\begin{definition}\label{def:desSYTT}
Let $\mathcal{T}=(T^0,\dots,T^{k-1})$ be a standard Young tableau tuple and let $c(x)$ be the content of an entry $x$ within its Young diagram.

Assume now that $i \in T^s$ and $(i+1) \in T^t$. 
Then $i$ is a descent of $\mathcal{T}$, if either $s\le t$ and $c(i)>c(i+1)$, or $s>t$ and $c(i)\ge c(i+1)$.
 $\DES(\mathcal T)$ denotes the set of all descents of $\mathcal T$.
\end{definition}

\begin{lemma}\label{lem:LittlewoodProperties}
Let $B\in \BST(\lambda,k)$ be a border strip tableau and let $\mathcal{T}=(T^0,\dots,T^{k-1})$ be the standard Young tableau tuple corresponding to $B$ via the Littlewood quotient map. Then $\DES(B) = \DES(\mathcal T)$.

Furthermore, let $s$ be the index of the unique Young diagram in $\mathcal T$ containing $1$ and let $B^1$ be the strip in $B$  containing $1$. Then $\height(B^1) = k-1-s\eqqcolon \idx_1(\mathcal T)$.
\end{lemma}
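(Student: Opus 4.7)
My plan is to use the abacus description of the Littlewood quotient map. Choose a padding $L$ divisible by $k$ with $L \ge \ell(\lambda)$, so every partition $\nu$ in the flag $\emptyset = \nu_0 \subset \dots \subset \nu_{n/k} = \lambda$ defining $B$ has a common $L$-element beta-set $\mathrm{beads}(\nu) := \{\nu_a + L - a : 1 \le a \le L\}$. Passing from $\nu_{i-1}$ to $\nu_i$ corresponds to moving a single bead from some position $j_i$ to $j_i + k$, i.e.\ one step forward on runner $s_i := j_i \bmod k$, and $s_i$ is precisely the index of the tableau $T^{s_i}$ into which the Littlewood map places the cell $y_i$ labelled $i$. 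The first task is to record the identity
\[
c(\mathrm{tail}(B^i)) + k - 1 \;=\; j_i + k - L \;=\; k\, c(y_i) + s_i,
\]
which expresses the largest content appearing in the strip $B^i$ in terms of the pair $(c(y_i), s_i)$. Since $(c, s) \mapsto kc + s$ is strictly monotone on $\mathbb{Z} \times \{0, \dots, k-1\}$ in the lexicographic order, \cref{def:desSYTT} rewrites immediately as
\[
i \in \DES(\mathcal T) \iff k\, c(y_i) + s_i \;>\; k\, c(y_{i+1}) + s_{i+1} \iff j_i > j_{i+1}.
\]

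For the $B$-side the key step is the formula
\[
r_{\mathrm{tail}}(B^i) \;=\; \bigl|\{\,\beta \in \mathrm{beads}(\nu_i) : \beta > j_i\,\}\bigr|.
\]
To prove it, I would note that the tail of $B^i$ lies in the row of $\nu_i$ whose $\beta$-value in $\nu_{i-1}$ is $j_i$, so $r_{\mathrm{tail}}(B^i)$ equals the decreasing rank of $j_i$ in $\mathrm{beads}(\nu_{i-1})$; and since $\mathrm{beads}(\nu_{i-1})$ is $\mathrm{beads}(\nu_i)$ with $j_i+k$ replaced by $j_i$, this rank equals the right-hand side above. Subtracting the formula for $i$ from the formula for $i+1$ gives
\[
r_{\mathrm{tail}}(B^{i+1}) - r_{\mathrm{tail}}(B^i) \;=\; 1 + \bigl|\{\beta \in \mathrm{beads}(\nu_i) : \beta > j_{i+1}\}\bigr| - \bigl|\{\beta \in \mathrm{beads}(\nu_i) : \beta > j_i\}\bigr|,
\]
and a short case split on $j_i$ versus $j_{i+1}$ (using $j_{i+1} \in \mathrm{beads}(\nu_i)$ but $j_i \notin \mathrm{beads}(\nu_i)$, so in particular $j_i \ne j_{i+1}$) shows this quantity is positive precisely when $j_i > j_{i+1}$. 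Combined with the previous paragraph, this yields $\DES(B) = \DES(\mathcal T)$.

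For the height of $B^1$, the partition $\nu_0 = \emptyset$ has the compact bead set $\{0, 1, \dots, L-1\}$, so any legal first bead move $j_1 \to j_1 + k$ must start at $j_1 \in \{L-k, \dots, L-1\}$. Writing $j_1 = L - m$ with $1 \le m \le k$, exactly $m-1$ beads of $\nu_0$ lie strictly between $j_1$ and $j_1 + k$, which is the height of $B^1$; and $s_1 = (L-m) \bmod k = k - m$. Hence $\height(B^1) = m - 1 = k - 1 - s_1 = \idx_1(\mathcal T)$. The main technical obstacle in the plan is the tail-row identity in the middle paragraph; once the abacus dictionary is in place, the rest is direct bookkeeping.
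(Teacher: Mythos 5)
Your proposal is correct and follows essentially the same route as the paper: the paper encodes the partitions in the flag as binary boundary-path sequences and tracks which step toggles when a strip is added, which is exactly your abacus/bead-set picture ($j$ is a bead iff step $j$ is a north step), and the core identity $c(\mathrm{tail}(B^i))+k-1 = k\,c(y_i)+s_i$ is literally the paper's claim $(c_B(i)-1)=k(c_{\mathcal T}(i)-1)+s$ after shifting by $k$. One small difference worth noting: the paper asserts as an unproved observation that $i\in\DES(B)$ iff $c_B(i)>c_B(i+1)$, whereas you actually prove the needed equivalence via the tail-row formula $r_{\mathrm{tail}}(B^i)=|\{\beta\in\mathrm{beads}(\nu_i):\beta>j_i\}|$ and the case split on $j_i$ versus $j_{i+1}$; this fills in a genuine gap in the paper's write-up. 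The computation for $\height(B^1)$ is also the same in substance (using $c_B(1)-1\equiv k-1-\height(B^1)\pmod k$), just phrased in terms of how far $j_1$ sits below $L$.
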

\begin{proof}
We first proof the following claim: For $i\in T^s$ let $c_{\mathcal{T}}(i)$ be the content of $i$ and let $c_B(i)$ the content of the tail of the strip containing $i$ in $B$, then $(c_B(i)-1) = k\cdot (c_{\mathcal{T}}(i)-1)+s$.

Recall that we obtain the $k$-quotient of a partition by first adding $0$ parts, such that the total number of rows $\ell$ is divisible by $k$ and then following the path of the lower-right contour of the Young diagram. We denote such a path with a finite binary sequence $w = (w_0,w_1,\dots)$ where a $0$ is a north step and a $1$ is an east step. Then the partitions in the $k$-quotient are given by the binary sequences $w^s \coloneqq (w_{i\cdot k+s})_{i\ge 0}$ for $0\le s < k$.

Let $\nu_i$ and $\nu_{i-1}$ be the partitions with $\ell$ (possibly zero) parts obtained from $B$ from all strips with labels at most $i$ and $i-1$ respectively.
Denote the path of $\nu_i$ with $(w_0,w_1,\dots)$ and the path of $\nu_{i-1}$ with $(w'_0,w'_1,\dots)$.
As $\nu_{i}/\nu_{i-1}$ is the border strip containing $i$, we have
\[
w_i = \begin{cases}1-w'_i=0 &\text{if $i=\ell+c_B(i)-1$,}\\
1-w'_i=1 &\text{if $i=\ell+c_B(i)-1+k$,}\\
w'_i &\text{else}\\
\end{cases}.
\]
Let $c_B(i)-1 = k\cdot q+r$ with integers $q$ and $0\le r< k$ and let $\ell = k\cdot \hat{\ell}$. Thus $w_{\ell+c_B(i)-1} = w_{k\cdot (\hat{\ell}+q)+r}$ is the $(\hat{\ell}+q)$-th step in the path of the $r$-th partition in the $k$-quotient of $\nu_i$. Similarly $w_{\ell+c_B(i)-1+k}$ is the $(\hat{\ell}+q+1)$-th step in the path of the same partition. The same holds for $w'_{\ell+c_B(i)-1}$ and $w'_{\ell+c_B(i)-1+k}$ with respect to the $r$-th partition of the $k$-quotient of $v_{i-1}$.
Therefore $r=s$ and $q = c_{\mathcal{T}}(i)-1$, which proves the claim.

Observe that $i$ is a descent in $B$, if and only if $c_B(i) > c_B(i+1)$. Moreover, let $i \in T^s$ and $(i+1) \in T^t$, then $i$ is a descent in $\mathcal{T}$ if and only if $k\cdot c_{\mathcal{T}}(i)+s>k\cdot c_{\mathcal{T}}(i+1)+t$. As $(c_B(i)-1) = k\cdot (c_{\mathcal{T}}(i)-1)+s$, we obtain $\DES(B) = \DES(\mathcal T)$.

For the second part of the theorem, note that
\[c_B(1)-1 = -1-\height(B^1) \equiv k-1-\height(B^1) \pmod{k}.\]
Thus $\height(B^1) = k-1-s$.
\end{proof}

The following graphical description may be helpful for understanding \cref{def:desSYTT}: Tilt the Young diagrams in $\mathcal{T}$ by $45$ degrees in counter-clockwise direction and align them such that cells with the same content lay on a horizontal line. We call this the \defin{"Austrian notation"}.
Then $i$ is a descent in $\mathcal{T}$ if and only if $(i+1)$ is
\begin{itemize}
\item in a tableau to the left and weakly below $i$, or
\item in the same tableau or a tableau to the right and strictly below $i$.
\end{itemize}
An example of the graphical description is given in \cref{fig:austrianNotation}.

\begin{remark}
\cref{def:desSYTT} agrees with the definition of descents using the content reading order in the Bylund--Haiman model \cite[Equations (77) and (80)]{Haglund2005}, which is used in the quasisymmetric expansion of LLT-polynomials. B. Westbury made us aware of this relation when we first presented our results to a broader audience.
\end{remark}

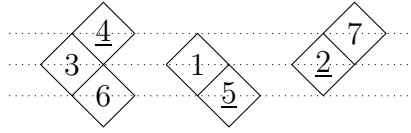
\begin{figure}

\[
\begin{tikzpicture}[x=1em,y=1em]
\draw (1,1) -- ++(2,-2) -- ++(-1,-1) -- (0,0) -- ++(2,2) -- ++(1,-1) -- ++(-2,-2);
\draw (1,0) node{$3$};
\draw (2,1) node{$\emphDes4$};
\draw (2,-1) node{$6$};

\draw (4,0) -- ++(1,1) -- ++(2,-2) -- ++(-1,-1) -- ++(-2,2)  (6,0) -- ++(-1,-1);
\draw (5,0) node{$1$};
\draw (6,-1) node{$\emphDes5$};

\draw (8,0) -- ++(2,2) -- ++(1,-1) -- ++(-2,-2) -- ++(-1,1)  (9,1) -- ++(1,-1);
\draw (9,0) node{$\emphDes2$};
\draw (10,1) node{$7$};

\draw[dotted] (-1,0) -- ++(13,0);
\draw[dotted] (-1,1) -- ++(13,0);
%\draw[dotted] (-1,2) -- ++(13,0);
\draw[dotted] (-1,-1) -- ++(13,0);
%\draw[dotted] (-1,-2) -- ++(13,0);

\node at (12,1.5) (c22) {};

\end{tikzpicture}
\]
\caption{The standard Young tableau tuple in "Austrian notation". On each dotted horizontal line are cells with the same content.}\label{fig:austrianNotation}
\end{figure}

We conclude this section by applying the Littlewood quotient map to the right hand side of \cref{eq:mainThm}.

\begin{lemma}\label{thm:SYT-tuples}
Let $\lambda$ be a partition with empty $k$-core and let $\Lambda = (\lambda^0,\dots, \lambda^{k-1})$ be its $k$-quotient,
then
\begin{equation}\label{eq:SYT-tuples}
\sum_{B\in \BST(\lambda,k)}t^{ k\cdot |\DES(B)| + \height(B^1)} = \sum_{\mathcal{T}\in \SYTT(\Lambda)}t^{ k\cdot |\DES(\mathcal{T})| + \idx_1(\mathcal{T})}.
\end{equation}
\end{lemma}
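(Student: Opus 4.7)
The statement is a direct consequence of the Littlewood quotient map together with \cref{lem:LittlewoodProperties}, so the proof will be a short unpacking rather than a new argument. My plan is the following.

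First, I invoke the Littlewood quotient map, described earlier in this section, as a bijection
\[
\Phi:\BST(\lambda,k)\xrightarrow{\;\sim\;}\SYTT(\Lambda),
\]
where $\Lambda=(\lambda^0,\dots,\lambda^{k-1})$ is the $k$-quotient of $\lambda$. Its existence follows from \cref{thm:quotient}\eqref{thm:quotient_strip}: writing a border strip tableau $B$ as the flag $\emptyset=\nu_0\subset\nu_1\subset\dots\subset\nu_{n/k}=\lambda$, taking $k$-quotients yields a flag of $k$-tuples of partitions in which consecutive terms differ by a single box, and this is precisely the data of a standard Young tableau tuple of shapes $\Lambda$.

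Next I transfer the statistic. For a fixed $B\in\BST(\lambda,k)$ and $\mathcal{T}=\Phi(B)$, \cref{lem:LittlewoodProperties} gives the two key identities
\[
|\DES(B)|=|\DES(\mathcal{T})|\qquad\text{and}\qquad \height(B^1)=\idx_1(\mathcal{T}).
\]
Consequently
\[
k\cdot|\DES(B)|+\height(B^1)=k\cdot|\DES(\mathcal{T})|+\idx_1(\mathcal{T}),
\]
i.e.\ the exponent of $t$ is preserved termwise under $\Phi$.

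Finally, since $\Phi$ is a bijection and the weight on each side agrees term by term, summing over $\BST(\lambda,k)$ on the left and over $\SYTT(\Lambda)$ on the right produces the same polynomial in $t$, which is exactly \eqref{eq:SYT-tuples}. There is no real obstacle here; all the content is already encapsulated in \cref{lem:LittlewoodProperties}. The lemma merely repackages that compatibility into the generating-function form that will be matched, in the following sections, against the principal specialisation of $s_\lambda$ at a root of unity.
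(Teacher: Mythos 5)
Your proof is correct and follows exactly the paper's argument: the paper's own proof is the one-line remark that the lemma is a direct consequence of \cref{lem:LittlewoodProperties}, and your write-up simply unpacks that by applying the Littlewood quotient bijection and the two identities $\DES(B)=\DES(\mathcal{T})$ and $\height(B^1)=\idx_1(\mathcal{T})$ termwise.
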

\begin{proof}
This is a direct consequence of \cref{lem:LittlewoodProperties}.
\end{proof}

\section{Schur functions}\label{sec:schur}
In this section we express $f^\lambda(q,t)$ in terms of Schur functions and evaluate it at roots of unity.
\begin{definition}
A \defin{semistandard Young tableau} of shape $\lambda$ is a filling of the Young diagram with positive integers such that
the rows are weakly increasing from left to right and
the columns are strictly increasing from top to bottom. Let \defin{$\SSYT(\lambda)$} be the set of all semistandard Young tableaux of shape $\lambda$.

For a semistandard Young tableau $T$ we associate the monomial $\xvec^T = \prod_{i\ge 1}x_i^{t_i}$ where $t_i$ is the number of occurrences of the number $i$ in $T$. The \defin{Schur function} $s_\lambda$ associated with $\lambda$ is the generating function
\[
s_{\lambda}(x_1,x_2,x_3,\dots) = \sum_{T\in \SSYT(\lambda)}\xvec^T.
\]
\end{definition}
When specialising precisely $m$ variables with $1$ and the rest with zero, one obtains the number of semistandard Young tableaux of shape $\lambda$ with entries bounded from above by $m$. We also denote this with
\[s_\lambda(\underbrace{1,\dots,1}_{m},0,0,\dots) = s_\lambda(1^m)=\#\text{$\SSYT$ of shape $\lambda$ with all entries $\le m$}.\]

The following identity relates $f^\lambda(q,t)$ to the principal specialisations of the the Schur function $s_\lambda$.

\begin{theorem}[R. Stanley {\cite[Proposition 8.3]{Stanley1971}}]%[Gessel 1993]
\label{thm:Stanley}
Let $\lambda\vdash n$ and let $(t;q)_{n+1} = (1-t)(1-tq)\dots(1-tq^n)$ be the $q$-Pochhammer-symbol, then
\[
\frac{f^{\lambda}(q,t)}{(t;q)_{n+1}} = \sum_{m=0}^\infty t^m s_\lambda (1,q,\dots,q^m).
\]
\end{theorem}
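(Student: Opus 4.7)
The natural approach is \emph{standardization} of semistandard Young tableaux. For $T \in \SSYT(\lambda)$ with $|\lambda|=n$, I would define $P=\mathrm{std}(T) \in \SYT(\lambda)$ by relabeling the cells of $T$ so that cells of equal value receive consecutive labels, proceeding by increasing value and, within each value class, from the bottom row upward and left-to-right across each row. By inspection this produces a valid SYT, and no two consecutive labels within a common value class form a descent of $P$. One verifies that the map $T \mapsto (P, a)$, where $a_i$ is the $T$-value of the cell labeled $i$ in $P$, is a bijection
\[
\{T \in \SSYT(\lambda) : \text{entries} \le m+1\} \;\longleftrightarrow\; \{(P, a) : P \in \SYT(\lambda),\; 1 \le a_1 \le \dots \le a_n \le m+1,\; a_i < a_{i+1} \text{ if } i \in \DES(P)\}.
\]
Since the sum of entries of $T$ equals $\sum_i a_i$, this rewrites $s_\lambda(1, q, \dots, q^m) = \sum_{(P,a)} q^{\sum_i (a_i - 1)}$.

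Next I would compute. For fixed $P$ with $D \coloneqq \DES(P)$, the reparametrization $b_i \coloneqq a_i - 1 - |D \cap [1, i-1]|$ bijects the descent-constrained sequences with unconstrained weakly increasing sequences $0 \le b_1 \le \dots \le b_n \le m - |D|$; a bookkeeping calculation gives $\sum_i(a_i - 1) = \sum_i b_i + n|D| - \maj(P)$. Multiplying by $t^m$, summing over $m \ge 0$, and exchanging orders of summation, the resulting inner sum over weakly increasing sequences weighted by $t^{b_n} q^{\sum b_i}$ factors, via $b_i = d_1 + \dots + d_i$ with $d_j \ge 0$, as $\prod_{j=1}^{n}(1-tq^j)^{-1}$, while the outer geometric series in $t$ contributes $(1-t)^{-1}$. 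The upshot is
\[
\sum_{m \ge 0} t^m s_\lambda(1, q, \dots, q^m) = \frac{1}{(t;q)_{n+1}} \sum_{P \in \SYT(\lambda)} t^{|\DES(P)|}\, q^{n|\DES(P)| - \maj(P)}.
\]

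It remains to identify the right-hand sum with $f^\lambda(q, t)$: the $q$-exponent $n|\DES(P)| - \maj(P)$ is the \emph{comajor} index of $P$, not $\maj(P)$. I would close this gap by invoking Schutzenberger's evacuation involution $e \colon \SYT(\lambda) \to \SYT(\lambda)$, which satisfies $\DES(e(P)) = \{n - i : i \in \DES(P)\}$. Consequently $|\DES(e(P))| = |\DES(P)|$ and $\maj(e(P)) = n|\DES(P)| - \maj(P)$, so $P \mapsto e(P)$ transfers one generating function to the other, yielding $f^\lambda(q, t)$. The main obstacle is establishing the standardization bijection correctly: one must choose a labeling order within each value class that never creates a descent and such that the remaining descents of $P$ correspond exactly to the positions where the sequence $a$ is forced to increase strictly. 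The evacuation input is classical and serves only to convert the comajor index into $\maj$.
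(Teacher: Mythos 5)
The paper does not prove this statement---it cites it directly from Stanley's 1971 work \cite[Proposition 8.3]{Stanley1971} and uses it as a black box---so there is no in-paper argument to compare against. Your blind proof is, however, correct: the standardization bijection between semistandard tableaux with entries at most $m+1$ and pairs $(P,a)$ with strict jumps at descents is the classical $(P,\omega)$-partition setup, the shift $b_i=a_i-1-|D\cap[1,i-1]|$ and the telescoping $b_i=d_1+\dots+d_i$ are carried out correctly, and the factorisation over $j$ correctly produces $1/(t;q)_{n+1}$ together with $\sum_P t^{|\DES(P)|}q^{n|\DES(P)|-\maj(P)}$. Invoking Schützenberger evacuation $e$, which is an involution on $\SYT(\lambda)$ with $\DES(e(P))=\{n-i:i\in\DES(P)\}$, then converts the comajor index to $\maj$ while preserving $|\DES|$, closing the gap. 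One small remark: you could avoid the evacuation step entirely by choosing the opposite standardization order within each value class (or equivalently by replacing $a$ with the complementary sequence $a_i'=m+2-a_{n+1-i}$ read against the reverse labeling), which produces $\maj$ directly rather than comaj; this is closer to how Stanley's $(P,\omega)$-partition theory is usually set up. Either way the argument is sound, and it is essentially Stanley's original proof.
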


This identity is particularly useful for us because the $q$-Pochhammer-symbol, as well as the principal specialisations of the Schur functions, can be easily evaluated at roots of unity.

\begin{proposition}\label{prop:PochhammerEvaluation}
Given $k\mid n$ and a primitive $k$-th root of unity $\xi$, we have that \[(t;\xi)_{n+1} = (1-t)(1-t^k)^{n/k}.\]
\end{proposition}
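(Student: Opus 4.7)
The plan is to expand the definition of the $q$-Pochhammer symbol and exploit the periodicity of the powers of $\xi$. Writing $m := n/k$, I have
\[
(t;\xi)_{n+1} = \prod_{j=0}^{n}(1-t\xi^{j}) = \prod_{j=0}^{km}(1-t\xi^{j}).
\]
Because $\xi^j$ depends only on $j$ modulo $k$, the $km+1$ factors split as the single factor coming from $j=km$ (which contributes $1-t\xi^{km} = 1-t$) together with $m$ complete blocks indexed by $j \equiv 0,1,\dots,k-1 \pmod k$. Hence
\[
(t;\xi)_{n+1} = (1-t)\,\Bigl[\prod_{j=0}^{k-1}(1-t\xi^{j})\Bigr]^{m}.
\]

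The core step is then the identity $\prod_{j=0}^{k-1}(1-t\xi^{j}) = 1-t^{k}$. I would derive this from the factorisation $z^{k}-1 = \prod_{j=0}^{k-1}(z-\xi^{j})$ valid for any primitive $k$-th root of unity $\xi$. Setting $z = 1/t$ and multiplying both sides by $t^{k}$ converts each factor $(1/t-\xi^{j})$ into $(1-t\xi^{j})$ (after absorbing one factor of $t$ per term), and turns $z^{k}-1$ into $1-t^{k}$. Substituting this back gives exactly $(1-t)(1-t^{k})^{n/k}$.

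There is essentially no obstacle here; the only mild point to verify is the counting of factors, namely that $n+1 = km+1$ splits into one leftover factor plus $m$ full cycles. So the entire proof is a one-line manipulation once the product of $(1-t\xi^{j})$ over a full set of $k$-th roots of unity is identified as $1-t^{k}$.
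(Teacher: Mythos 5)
Your proof is correct and follows essentially the same route as the paper: peel off one extra factor equal to $1-t$, group the remaining $n$ factors into $n/k$ complete blocks over the $k$-th roots of unity, and identify each block with $1-t^k$ via the factorisation $z^k-1=\prod_{j=0}^{k-1}(z-\xi^j)$ at $z=1/t$. The only cosmetic difference is that you pull out the $j=n$ factor while the paper pulls out $j=0$, which of course makes no difference.
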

\begin{proof}
Because $\xi$ is a $k$-th primitive root of unity we have $\xi^{\ell k+r}=\xi^r$ for integers $\ell$ and $r$ and $x^k-1 = \prod_{i=0}^{k-1}(x-\xi^i)$. We obtain:
\begin{align*}
(t;\xi)_{n+1} &= (1-t)(1-t\xi)\dots(1-t\xi^n) = (1-t)\left(\prod_{i=0}^{k-1}(1-t\xi^i)\right)^{n/k}\\
&= (1-t)\left(t^k\prod_{i=0}^{k-1}(1/t-\xi^i)\right)^{n/k} = (1-t)\left(t^k(1/t^k-1)\right)^{n/k}\\
&= (1-t)(1-t^k)^{n/k}.
\end{align*}
\end{proof}

\begin{theorem}\label{thm:SchurEvaluation}
Let $\lambda\vdash n$ be a partition with empty $k$-core and with $k$-quotient $(\lambda^0,\dots,\lambda^{k-1})$, and let $\xi$ be a primitive $k$-th root of unity.
If $m = \ell\cdot k+r$ for $0\le r < k$, then
\begin{equation}\label{eq:SchurEvaluation}
s_\lambda(1,\xi,\dots, \xi^{m-1}) =\epsilon_{\lambda,k}\cdot s_{\lambda^0}(1^\ell)\cdots s_{\lambda^{k-r-1}}(1^\ell)\cdot s_{\lambda^{k-r}}(1^{\ell+1}) \cdots s_{\lambda^{k-1}}(1^{\ell+1}),
\end{equation}
where $\epsilon_{\lambda,k}$ is the sign from \cref{thm:sign}.
\end{theorem}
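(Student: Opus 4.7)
The natural starting point is R.~Stanley's $q$-analog of the hook-content formula,
\[
s_\lambda(1,q,\ldots,q^{m-1}) \;=\; q^{n(\lambda)}\prod_{x\in\lambda}\frac{1-q^{m+c(x)}}{1-q^{h(x)}},
\]
which lets us evaluate the left-hand side of \eqref{eq:SchurEvaluation} as a limit as $q\to\xi$. Both numerator and denominator pick up zeros at $q=\xi$, so we must pair them. Define $A=\{x\in\lambda:k\mid h(x)\}$ and $B=\{x\in\lambda:k\mid(m+c(x))\}$; because $m\equiv r\pmod k$, the set $B$ consists of cells with $c(x)\equiv -r\pmod k$, i.e.\ $B=c_{r,k}(\lambda)$ in the notation of \cref{prop:contents-quotient}. \cref{prop:hookvalues-quotient,prop:contents-quotient} give $|A|=|B|=n/k$, so the zeros balance and the limit is finite.

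\textbf{The main (non-trivial) factor.} Using the elementary identity $\lim_{q\to\xi}(1-q^{kj})/(1-q^k)=j$, the combined contribution of the $A$- and $B$-factors becomes
\[
\frac{\prod_{x\in B}(m+c(x))/k}{\prod_{x\in A}h(x)/k}.
\]
Here \cref{prop:hookvalues-quotient} identifies the denominator with $\prod_{i=0}^{k-1}\prod_{y\in\lambda^i}h(y)$, while \cref{prop:contents-quotient}, together with $(m+c(x))/k=\ell+(c(x)+r)/k$, identifies the numerator with
\[
\prod_{i<k-r}\prod_{y\in\lambda^i}(\ell+c(y))\;\cdot\;\prod_{i\ge k-r}\prod_{y\in\lambda^i}(\ell+1+c(y)).
\]
Applying the ordinary hook-content formula $s_\mu(1^M)=\prod_y(M+c(y))/h(y)$ to each component, this ratio is precisely
\[
s_{\lambda^0}(1^\ell)\cdots s_{\lambda^{k-r-1}}(1^\ell)\cdot s_{\lambda^{k-r}}(1^{\ell+1})\cdots s_{\lambda^{k-1}}(1^{\ell+1}),
\]
i.e.\ the right-hand side of \eqref{eq:SchurEvaluation} without the sign.

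\textbf{The leftover factor and the sign.} What remains is to show that the ``regular'' factor
\[
R(\lambda,k,m)\;\coloneqq\;\xi^{n(\lambda)}\cdot\frac{\prod_{x\in\lambda\setminus B}(1-\xi^{m+c(x)})}{\prod_{x\in\lambda\setminus A}(1-\xi^{h(x)})}
\]
equals $\epsilon_{\lambda,k}\in\{\pm1\}$. The idea is to bundle the factors by their residue modulo~$k$: for each residue $j\in\{1,\ldots,k-1\}$, collect all hook values of $\lambda\setminus A$ congruent to $j\bmod k$ and all shifted contents $m+c(x)$ for $x\in\lambda\setminus B$ congruent to $j\bmod k$. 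Since only the residue class matters, each bundle reduces to an integer exponent of $1-\xi^j$, and one checks that numerator and denominator bundles have the same size in each residue class. This forces $|R(\lambda,k,m)|=1$. The sign can then be pinned down by specialising the whole identity at a value where it is already known: taking $m=k\ell$ and using the $t^0$-coefficient of \cref{thm:Stanley} together with the Murnaghan--Nakayama evaluation \eqref{eq:fakedegreeEvaluation} forces $R=\epsilon_{\lambda,k}$; independence of $R$ from $m$ (which is a separate elementary check, bundle by bundle) then yields the claim for all $m$.

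\textbf{Main obstacle.} Steps~1 and~2 are bookkeeping driven by \cref{prop:hookvalues-quotient,prop:contents-quotient}, so the real work is in step~3: organising the non-cancelling factors of $1-\xi^j$ so that the bundles match, and correctly tracking the contribution of $\xi^{n(\lambda)}$. Signs of $1-\xi^j$ (as complex phases) and the interplay with the prefactor $\xi^{n(\lambda)}$ are where the computation is most error-prone; fixing the sign via the already-known case \eqref{eq:fakedegreeEvaluation} at $t=1$ bypasses a direct phase computation and is likely the cleanest route.
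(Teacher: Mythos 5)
Your proposal follows the same skeleton as the paper's proof: apply the $q$-analog of the hook-content formula, separate the cells where $k\mid h(x)$ (your $A$) and where $k\mid(m+c(x))$ (your $B$), pair the vanishing factors via $\lim_{q\to\xi}(1-q^{kj})/(1-q^{ki})=j/i$, and then identify the surviving product with $\prod_i s_{\lambda^i}(1^\ell)$ or $s_{\lambda^i}(1^{\ell+1})$ using \cref{prop:hookvalues-quotient,prop:contents-quotient}. The treatment of the residual factor (show $m$-independence, then pin it to $\epsilon_{\lambda,k}$ via the known $r=0$ case of Reiner--Stanton--White) is also exactly the route taken in the paper.

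There is, however, one incorrect intermediate claim: that ``numerator and denominator bundles have the same size in each residue class,'' from which you deduce $|R(\lambda,k,m)|=1$. The shifted contents $m+c(x)$ do indeed hit each nonzero residue class exactly $n/k$ times when the $k$-core is empty — this is what makes $R$ independent of $m$, since $\prod_{k\nmid m+c(x)}(1-\xi^{m+c(x)})=\prod_{j=1}^{k-1}(1-\xi^j)^{n/k}$. But the hook lengths are \emph{not} equidistributed modulo $k$: for instance $\lambda=(2,1)$ with $k=3$ has hook multiset $\{3,1,1\}$, so the residue-$1$ bundle has size $2$ and the residue-$2$ bundle has size $0$, whereas the content residues are $\{0,1,2\}$ each once. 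So the bundle-by-bundle cancellation you invoke does not happen, and the equality $|R|=1$ cannot be read off this way. Fortunately this step is dispensable: once $m$-independence is established, comparing with the known $r=0$ evaluation of Reiner--Stanton--White determines $R=\epsilon_{\lambda,k}$ outright, and $|R|=1$ follows as a corollary rather than needing a separate lemma. Deleting the bundle-counting claim and relying solely on the known-case comparison — which your ``main obstacle'' paragraph already singles out as the cleaner route — makes the argument correct and identical to the paper's.
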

\begin{remark} The case $r=0$ in \cref{thm:SchurEvaluation} is a theorem due to V. Reiner, D. Stanton and D. White \cite[Theorem 4.3]{Reiner2004}.
\end{remark}

\begin{proof}
Let $b(\lambda)=\sum(i-1)\lambda_i$ and $R_{\lambda,k}=\xi^{b(\lambda)}\frac{\prod_{x\in \lambda, k\nmid m+c(x)}(1-\xi^{m+c(x)})}{\prod_{x\in \lambda, k\nmid h(x)}(1-\xi^{h(x)})}$, then by \cref{prop:hookvalues-quotient}, \cref{prop:contents-quotient} and the hook-content formula~\cite[Theorem 7.21.2]{Stanley1999} we get:

\begin{align*}s_\lambda(1,\xi,\dots&, \xi^{m-1}) = \xi^{b(\lambda)}\prod_{x\in \lambda}\frac{1-\xi^{m+c(x)}}{1-\xi^{h(x)}}=R_{\lambda,k} \cdot \frac{\prod_{x\in \lambda, k\mid m+c(x)}(1-\xi^{m+c(x)})}{\prod_{x\in \lambda, k\mid h(x)}(1-\xi^{h(x)})}\\
&= R_{\lambda, k}\cdot \frac{\prod_{x\in \lambda, k\mid m+c(x)}(m+c(x))}{\prod_{x\in \lambda, k\mid h(x)}h(x)}= R_{\lambda,k}\cdot \frac{\prod_{x\in \lambda, k\mid r+c(x)}(\ell +(r+c(x))/k)}{\prod_{x\in \lambda, k\mid h(x)}h(x)/k}\\
&= R_{\lambda,k}\cdot \frac{\prod_{u \in \frac{1}{k}(c_{r,k}(\lambda)+r)}(\ell +u)}{\prod_{u \in \frac{1}{k}h_{0,k}(\lambda)}u}\\
&= R_{\lambda,k}\cdot \frac{\prod_{u \in \bigcup_{0\le i<k-r} c(\lambda^i) \cup \bigcup_{k-r\le i<k}(c(\lambda^i)+1)}(\ell +u)}{\prod_{u \in \bigcup_{0\le i<k} h(\lambda^i)}u}\\
&= R_{\lambda,k}\cdot \frac{\prod_{u \in \bigcup_{0\le i<k-r} c(\lambda^i)}(\ell +u)\cdot \prod_{u \in \bigcup_{k-r\le i<k}c(\lambda^i)}(\ell+1 +u)}{\prod_{u \in \bigcup_{0\le i<k-r} h(\lambda^i)}u \cdot \prod_{u \in \bigcup_{k-r\le i<k} h(\lambda^i)}u}\\
&= R_{\lambda,k} \prod_{0\le i < k-r} s_{\lambda^i}(1^\ell)\cdot \prod_{k-r\le i < k}s_{\lambda^i}(1^{\ell+1}).
\end{align*}

As $\lambda$ has empty $k$-core each content modulo $k$ appears $n/k$ times. Hence,
\[R_{\lambda,k}=\xi^{b(\lambda)}\frac{\prod_{x\in \lambda, k\nmid m+c(x)}(1-\xi^{m+c(x)})}{\prod_{x\in \lambda, k\nmid h(x)}(1-\xi^{h(x)})}=\xi^{b(\lambda)}\frac{\prod_{0<i\le k-1}(1-\xi^{i})^{n/k}}{\prod_{x\in \lambda, k\nmid h(x)}(1-\xi^{h(x)})},
\]
which does not depend on $m$.

As the formula \eqref{eq:SchurEvaluation} is satisfied for $r=0$ due to  \cite[Theorem 4.3]{Reiner2004} by V. Reiner, D. Stanton and D. White, we obtain $R_{\lambda,k}= \epsilon_{\lambda,k}$.
\end{proof}

The right hand side of \cref{eq:SchurEvaluation} can, up to sign, be interpreted combinatorially as the number of tuples of semistandard Young diagrams $(T^0,T^1,\dots, T^{k-1})$ such that
\begin{itemize}
\item the tableau $T^i$ has shape $\lambda^i$,
\item the tableaux $T^0,T^1,\dots,T^{k-r-1}$ have entries between $2$ and $\ell+1$, and
\item the tableaux  $T^{k-r},T^{k-r+1},\dots,T^{k-1}$ have entries between $1$ and $\ell+1$.
\end{itemize}
Keeping the notation of \cref{thm:SchurEvaluation}, this yields:

\begin{lemma}\label{thm:SSYT-tuples}
Denote with $\SSYTT(\Lambda)$ the set of all tuples of semistandard Young tableaux with shapes $\Lambda = (\lambda^0,\dots,\lambda^{k-1})$ that contain at least one $1$.
For such a tuple $\mathfrak{T}$, let $\max(\mathfrak{T})$ be the maximal entry. Let $s$ be the index of the leftmost tableau containing $1$ and set $\idx_1(\mathfrak{T})\coloneqq k-1-s$. Then
\begin{align}\label{eq:SSYT-tuples}
\frac{1}{(1-t^k)^{n/k-1}} f^{\lambda}(\xi,t) = & (1-t)(1-t^k) \sum_{m=0}^\infty t^m s_\lambda (1,\xi,\dots,\xi^m) =\nonumber \\
&\epsilon_{\lambda,k}\cdot\sum_{\mathfrak{T} \in \SSYTT(\Lambda)} t^{k\cdot (\max(\mathfrak{T})-1) + \idx_1(\mathfrak{T})}.
\end{align}

\end{lemma}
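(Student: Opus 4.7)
The first equality is immediate: substituting $q=\xi$ in Stanley's identity (\cref{thm:Stanley}), using the factorisation $(t;\xi)_{n+1}=(1-t)(1-t^k)^{n/k}$ from \cref{prop:PochhammerEvaluation}, and dividing through by $(1-t^k)^{n/k-1}$ yields the middle expression. The real work is the second equality.

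For it, we apply \cref{thm:SchurEvaluation} to rewrite each $s_\lambda(1,\xi,\dots,\xi^m)$ as $\epsilon_{\lambda,k}\cdot P(\ell',r')$, where $m+1=\ell'k+r'$ with $0\le r'<k$ and $P(\ell',r')$ counts tuples $\mathfrak{T}=(T^0,\dots,T^{k-1})\in\SSYTT(\Lambda)$ whose entries are at most $\ell'$ in $T^i$ for $i<k-r'$ and at most $\ell'+1$ in $T^i$ for $i\ge k-r'$. Setting $N=\max(\mathfrak{T})$ and $i^*=\min\{i:T^i\ni N\}$, a short case analysis shows that $\mathfrak{T}\in P(\ell',r')$ is equivalent to $\ell'k+r'\ge Nk-i^*$. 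Summing the resulting geometric series in $M=\ell'k+r'=m+1$ gives
\[
(1-t)(1-t^k)\sum_{m\ge 0}t^m s_\lambda(1,\xi,\dots,\xi^m)=\epsilon_{\lambda,k}\,(1-t^k)\sum_{\mathfrak{T}\in\SSYTT(\Lambda)}t^{Nk-1-i^*}.
\]

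Next, the map $\mathfrak{T}\mapsto\mathfrak{T}+1$ that increments every entry of every component is a bijection from $\SSYTT(\Lambda)$ onto the subset of tuples in $\SSYTT(\Lambda)$ that do not contain $1$; it sends $N\mapsto N+1$ while preserving $i^*$. The factor $(1-t^k)$ therefore cancels precisely these shifted tuples, leaving
\[
(1-t^k)\sum_{\mathfrak{T}\in\SSYTT(\Lambda)}t^{Nk-1-i^*}=\sum_{\mathfrak{T}\in\SSYTT(\Lambda),\;\mathfrak{T}\ni 1}t^{Nk-1-i^*}.
\]

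The final step, which we expect to be the main obstacle, is to replace $i^*$ by $s_1:=\min\{i:T^i\ni 1\}$, since $Nk-1-s_1=k(\max(\mathfrak{T})-1)+\idx_1(\mathfrak{T})$ matches the desired exponent. We plan to establish this coefficient-wise: for every $N\ge 1$ and $0\le s\le k-1$, the number of tuples $\mathfrak{T}\ni 1$ with $\max=N$ and $s_1=s$ equals the number with $\max=N$ and $i^*=s$. Both cardinalities factor as $F(N,s)-F(N-1,s)$, where
\[
F(N,s):=\prod_{i<s}s_{\lambda^i}(1^{N-1})\cdot\bigl(s_{\lambda^s}(1^N)-s_{\lambda^s}(1^{N-1})\bigr)\cdot\prod_{i>s}s_{\lambda^i}(1^N).
\]
For the count with $s_1=s$, $F(N,s)$ enumerates tuples with $s_1=s$ and $\max\le N$ (no $1$'s in the first $s$ components, encoded via an entry shift), and the outer subtraction enforces $\max=N$. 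For $i^*=s$, $F(N,s)$ enumerates tuples with $i^*=s$ and $\max=N$ directly, and the outer subtraction enforces "contains a $1$" (again via an entry shift). The middle factor $s_{\lambda^s}(1^N)-s_{\lambda^s}(1^{N-1})$ is common to both because, among SSYT of shape $\lambda^s$ with entries in $\{1,\dots,N\}$, avoiding $1$ and avoiding $N$ each leave the same count $s_{\lambda^s}(1^{N-1})$.
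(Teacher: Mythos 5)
Your argument is correct, but it takes a genuinely different route through the middle of the proof, so let me compare. Both proofs start identically (Stanley's identity plus the Pochhammer factorisation). From there, the paper interprets the \emph{differences} $\epsilon_{\lambda,k}\bigl(s_\lambda(1,\ldots,\xi^m)-s_\lambda(1,\ldots,\xi^{m-1})\bigr)$ directly as the number of tuples containing a $1$ with prescribed $\idx_1$ and bounded maximum, so that $\idx_1$ is present from the start; the two factors $(1-t)$ and $(1-t^k)$ then each produce a single telescoping. You instead interpret each term $\epsilon_{\lambda,k}\,s_\lambda(1,\ldots,\xi^m)$ as the count $P(\ell',r')$ of \emph{all} SSYT tuples under component-wise entry bounds, characterize membership by the inequality $\ell'k+r'\ge Nk-i^*$, sum a geometric series in $m$, and use the shift $\mathfrak{T}\mapsto\mathfrak{T}+1$ to absorb the $(1-t^k)$; but the resulting exponent involves $i^*$ (the leftmost component holding the maximum) rather than $s_1$ (the leftmost component holding a $1$), so you then need the extra coefficient-wise identity $|\{\mathfrak{T}\ni 1:\max=N,\,i^*=s\}|=|\{\mathfrak{T}\ni 1:\max=N,\,s_1=s\}|$, which you establish correctly via the common factorisation through $F(N,s)-F(N-1,s)$. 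I checked each step; the inequality characterisation and the factorisation both hold, and the conclusion matches the lemma. What your approach buys is the clean closed-form description of which tuples a given specialisation counts; what it costs is the extra $i^*$-versus-$s_1$ symmetry lemma, which the paper's ordering of the two telescopes avoids entirely.

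One notational caution: in the lemma, $\SSYTT(\Lambda)$ is \emph{defined} to mean tuples that contain at least one $1$, but in the middle of your argument you write $\mathfrak{T}\in\SSYTT(\Lambda)$ (and again $\sum_{\mathfrak{T}\in\SSYTT(\Lambda)}t^{Nk-1-i^*}$) when you must mean \emph{all} tuples of semistandard Young tableaux of shapes $\Lambda$, with no condition on containing a $1$. In particular, the sentence ``a bijection from $\SSYTT(\Lambda)$ onto the subset of tuples in $\SSYTT(\Lambda)$ that do not contain $1$'' is a contradiction with the lemma's convention, since that subset would be empty. You should introduce a separate symbol for the unrestricted set and reserve $\SSYTT(\Lambda)$ for the final expression; with that fix, the proof goes through.
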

\begin{proof}
The first equation is a direct consequence of \cref{thm:Stanley} and \cref{prop:PochhammerEvaluation}.

By the combinatorial interpretation of the right hand side of \cref{eq:SchurEvaluation} we obtain for $m=k\cdot\ell+r$
\begin{align*}
\epsilon_{\lambda,k}(s_\lambda (1,\xi,\dots,\xi^m) &- s_\lambda (1,\xi,\dots,\xi^{m-1})) =\\
&|\{\mathfrak{T} \in \SSYTT(\Lambda): \idx_1(\mathfrak{T})=r, \max(\mathfrak{T})\le \ell+1\}|
\end{align*}
and for $m=0=k\cdot 0+0$
\[
\epsilon_{\lambda,k}s_\lambda (1) =
|\{\mathfrak{T} \in \SSYTT(\Lambda): \idx_1(\mathfrak{T})=0, \max(\mathfrak{T})= 1\}|.
\]
Thus
\begin{align*}
\epsilon_{\lambda,k}&\cdot(1-t)(1-t^k) \sum_{m=0}^\infty t^m s_\lambda (1,\xi,\dots,\xi^m) = \\
&\epsilon_{\lambda,k}(1-t^k)\left(t^0s_{\lambda}(1)+\sum_{m=1}^\infty t^m(s_\lambda (1,\xi,\dots,\xi^m) - s_\lambda (1,\xi,\dots,\xi^{m-1}))\right)=\\
&(1-t^k)\sum_{\ell=0}^{\infty}\sum_{r=0}^{k-1}t^{k\cdot\ell+r}|\{\mathfrak{T} \in \SSYTT(\Lambda): \idx_1(\mathfrak{T})=r, \max(\mathfrak{T})\le \ell+1\}|=\\
&\sum_{\ell=0}^{\infty}\sum_{r=0}^{k-1}t^{k\cdot\ell+r}|\{\mathfrak{T} \in \SSYTT(\Lambda): \idx_1(\mathfrak{T})=r, \max(\mathfrak{T})= \ell+1\}|=\\
&\sum_{\mathfrak{T} \in \SSYTT(\Lambda)} t^{k\cdot (\max(\mathfrak{T})-1) + \idx_1(\mathfrak{T})}.
\end{align*}
\end{proof}

\section{The final bijection}\label{sec:bijection}
In this section we discus the bijection that links together \cref{eq:SYT-tuples} and \cref{eq:SSYT-tuples} and proves our main result \cref{thm:mainThm}.

Denote with $C_p$ the set of weak compositions with precisely $p$ parts, that is the set of $p$-tuples of non negative integers. Let $\Lambda = (\lambda^0,\dots,\lambda^{k-1})$ be a $k$-tuple of partitions and let $\ell = |\lambda^0|+\dots+|\lambda^{k-1}|$.
We now present a bijection
\[
\phi: C_{\ell-1} \times \SYTT(\Lambda) \to \SSYTT(\Lambda).
\]
Fix $\alpha = (\alpha_1,\dots,\alpha_{\ell-1})\in C_{\ell-1}$ and $\mathcal{T}\in \SYTT(\Lambda)$. For $1\le s \le \ell$, let $x_s$ be the unique cell in $\mathcal{T}$ that contains $s$ and let $d_s$ be the number of descents in $\mathcal{T}$ that are strictly smaller than $s$.
Let $\mathfrak{T}$ be the tuple of semistandard Young tableaux obtained by filling the cell $x_s$ with $1+d_s+\sum_{i=1}^{s-1}\alpha_i$ and set $\phi(\alpha,\mathcal{T})=\mathfrak{T}$.
An example is given in \cref{fig:bijection}.

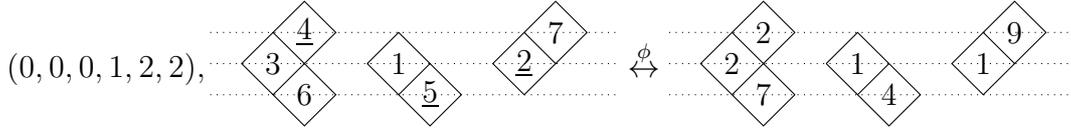
\begin{figure}

\[(0,0,0,1,2,2),
\begin{tikzpicture}[x=1em,y=1em,baseline={([yshift=-1ex]current bounding box.center)}]
\draw (1,1) -- ++(2,-2) -- ++(-1,-1) -- (0,0) -- ++(2,2) -- ++(1,-1) -- ++(-2,-2);
\draw (1,0) node{$3$};
\draw (2,1) node{$\emphDes{4}$};
\draw (2,-1) node{$6$};

\draw (4,0) -- ++(1,1) -- ++(2,-2) -- ++(-1,-1) -- ++(-2,2)  (6,0) -- ++(-1,-1);
\draw (5,0) node{$1$};
\draw (6,-1) node{$\emphDes{5}$};

\draw (8,0) -- ++(2,2) -- ++(1,-1) -- ++(-2,-2) -- ++(-1,1)  (9,1) -- ++(1,-1);
\draw (9,0) node{$\emphDes{2}$};
\draw (10,1) node{$7$};

\draw[dotted] (-1,0) -- ++(13,0);
\draw[dotted] (-1,1) -- ++(13,0);
%\draw[dotted] (-1,2) -- ++(13,0);
\draw[dotted] (-1,-1) -- ++(13,0);
%\draw[dotted] (-1,-2) -- ++(13,0);
\end{tikzpicture}
\stackrel{\phi}{\leftrightarrow} \begin{tikzpicture}[x=1em,y=1em,baseline={([yshift=-1ex]current bounding box.center)}]
\draw (1,1) -- ++(2,-2) -- ++(-1,-1) -- (0,0) -- ++(2,2) -- ++(1,-1) -- ++(-2,-2);
\draw (1,0) node{$2$};
\draw (2,1) node{$2$};
\draw (2,-1) node{$7$};

\draw (4,0) -- ++(1,1) -- ++(2,-2) -- ++(-1,-1) -- ++(-2,2)  (6,0) -- ++(-1,-1);
\draw (5,0) node{$1$};
\draw (6,-1) node{$4$};

\draw (8,0) -- ++(2,2) -- ++(1,-1) -- ++(-2,-2) -- ++(-1,1)  (9,1) -- ++(1,-1);
\draw (9,0) node{$1$};
\draw (10,1) node{$9$};

\draw[dotted] (-1,0) -- ++(13,0);
\draw[dotted] (-1,1) -- ++(13,0);
%\draw[dotted] (-1,2) -- ++(13,0);
\draw[dotted] (-1,-1) -- ++(13,0);
%\draw[dotted] (-1,-2) -- ++(13,0);
\end{tikzpicture}
\]
\caption{Bijectively mapping a composition and a standard Young tableau tuple to a tuple of semistandard Young tableaux.}\label{fig:bijection}
\end{figure}
\begin{remark}
This map fits into the setting of the MacMahon Verfahren (see \cite{MacMahon1913} and \cite[Chapter 10]{Lothaire2002}) and the theory of $P$-partitions developed by R. Stanley \cite{Stanley1971}.
\end{remark}

\begin{proposition}
The function $\phi: C_{\ell-1} \times \SYTT(\Lambda) \to \SSYTT(\Lambda)$ is a bijection and for $\phi(\alpha,\mathcal{T})=\mathfrak{T}$, we have
\[
|\alpha|+|\DES(\mathcal T)|+1 = \max(\mathfrak{T}) \quad\text{and}\quad \idx_1(\mathcal{T})=\idx_1(\mathfrak{T}),
\]
where $|\alpha|=\alpha_1+\dots+\alpha_{\ell-1}$ is the sum of its parts.
\end{proposition}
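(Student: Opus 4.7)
The plan is to verify well-definedness, construct an explicit inverse, and then read off the statistics. To each cell $x$ in component $T^r$ I assign the key $K(x) := k\cdot c(x) + r$. A direct unpacking of \cref{def:desSYTT} shows that $s \in \DES(\mathcal{T})$ if and only if $K(x_s) > K(x_{s+1})$. Within any fixed value of a semistandard tuple, $K$ takes distinct values: in a single component two cells of the same SSYT value lie on different diagonals, and across components the summand $r$ keeps the keys separated. This key will drive both the well-definedness of $\phi$ and the tie-breaking in its inverse.

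For well-definedness of $\phi(\alpha,\mathcal{T}) = \mathfrak{T}$, observe that $v_s := 1 + d_s + \sum_{i<s}\alpha_i$ is non-decreasing in $s$, with increment $v_{s+1}-v_s$ equal to $\alpha_s$ plus one when $s$ is a descent. The row condition in each component is immediate, since cells to the right carry larger labels in $\mathcal{T}$. For the column condition, a cell directly below another in the same component drops $K$ by exactly $k$, so telescoping $K(x_{s+1})-K(x_s)$ between any two vertically adjacent cells forces at least one descent in between, yielding the required strict inequality.

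For the inverse, given $\mathfrak{T}$, I list its cells by increasing $\mathfrak{T}$-value with ties broken by increasing $K$, and assign labels $1,2,\ldots,\ell$ in that order to form $\mathcal{T}$. This $\mathcal{T}$ is a valid SYT tuple: within each component, a cell to the right of $x$ has weakly larger value and $K$ greater by $k$, while a cell directly below has strictly larger value and $K$ smaller by $k$. Setting $\alpha_s := v_{s+1}-v_s$, decremented by one when $s\in\DES(\mathcal{T})$, produces a non-negative composition: whenever $v_s = v_{s+1}$ the tie-break forces $K(x_s)<K(x_{s+1})$, so $s\notin\DES(\mathcal{T})$ and $\alpha_s = 0$. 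A telescoping check then shows $\phi(\alpha,\mathcal{T}) = \mathfrak{T}$, and the two maps are mutually inverse.

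The identity $\max(\mathfrak{T}) = v_\ell = 1 + |\DES(\mathcal{T})| + |\alpha|$ is immediate from the formula at $s=\ell$. The preservation of $\idx_1$ will be the main obstacle. The cell $x_1$ carrying $1$ in $\mathcal{T}$ has $v_1 = 1$ and lies in some component $T^{s_1}$, but other cells in $\mathfrak{T}$ may also carry value $1$. The key structural input is that value $1$ can only appear in row $1$ of any SSYT component (columns are strictly increasing), so every cell $x_j$ with $v_j=1$ has content $c_j \geq 0$. An induction on $j$ over $\{1,\ldots,m\}$, where $m$ is the number of cells with value $1$, then shows the component index $r_j \geq s_1$: if $c_j = 0$, the non-descent condition $K(x_{j-1})<K(x_j) = r_j$ combined with $c_{j-1}\geq 0$ yields $r_j > r_{j-1} \geq s_1$; if $c_j \geq 1$, the leftmost cell in the same row of the same component carries an earlier label $j' < j$, necessarily $\leq m$, with $r_{j'} = r_j$, and the induction hypothesis concludes.
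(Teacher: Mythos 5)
Your argument is correct and follows the same route as the paper: the inverse map you describe is identical, with your ``increasing $K$'' tie-break being a reformulation of the paper's ``unique order that does not create any descents,'' and the key $K(x)=k\cdot c(x)+r$ is exactly the linearization of \cref{def:desSYTT} already used in the proof of \cref{lem:LittlewoodProperties}. Where you add genuine content is the preservation of $\idx_1$: the paper dispatches this with ``by construction the leftmost one in $\mathfrak{T}$ coincides with the one in $\mathcal{T}$,'' which in fact requires the argument you supply, namely that no component $T^r$ with $r<s_1$ can contain a $1$ in $\mathfrak{T}$. Your induction is sound; it can be shortened by noting that any component containing a $1$ has its content-$0$ corner filled with $1$, hence equal to some $x_j$ with $j\le m$ and $K(x_j)=r_j$, and since $K$ is strictly increasing on $\{x_1,\dots,x_m\}$ one gets $r_j\ge K(x_1)=s_1$ directly.
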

\begin{proof}
By construction the leftmost one in $\mathfrak{T}$ coincides with the one in $\mathcal{T}$, so $\idx_1(\mathcal{T})=\idx_1(\mathfrak{T})$. The maximal entry of $\mathfrak{T}$ is the filling of $x_{\ell}$, which is $1+d_{\ell}+\sum_{i=1}^{\ell-1}\alpha_i = |\alpha|+|\DES(\mathcal T)|+1$.

It remains to show, that this map is a bijection, by giving the inverse map.
To obtain $\mathcal{T}$ follow the entries from $\mathfrak{T}$ in increasing order and fill the corresponding cells of $\mathcal{T}$ with the smallest positive integer, which is not already used. If some values in $\mathfrak{T}$ coincide fill the corresponding cells of $\mathcal{T}$ in the unique order, that does not create any descents.
Having now $\mathfrak{T}$ and $\mathcal{T}$ we can again calculate the values $d_s$ and obtain the composition $\alpha$ from the fact that the cell $x_s$ is filled with $1+d_s+\sum_{i=1}^{s-1}\alpha_i$ in $\mathfrak{T}$.
\end{proof}
From this proposition we get:

\begin{lemma}\label{thm:bijection}
Let $\lambda\vdash n$ be a partition with empty $k$-core and let $\Lambda = (\lambda^0,\dots,\lambda^{k-1})$ its $k$-quotient. Then
\[
\frac{1}{(1-t^k)^{n/k-1}} \sum_{\mathcal{T}\in \SYTT(\Lambda)}t^{ k\cdot |\DES(\mathcal{T})| + \idx_1(\mathcal{T})} = \sum_{\mathfrak{T} \in \SSYTT(\Lambda)} t^{k\cdot (\max(\mathfrak{T})-1) + \idx_1(\mathfrak{T})}.
\]
\end{lemma}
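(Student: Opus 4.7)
The plan is to use the bijection $\phi$ from the preceding proposition as the sole mechanism and then recognize a geometric series. Writing $\ell = n/k = |\lambda^0|+\dots+|\lambda^{k-1}|$, I would start from the right-hand side, expand the sum over $\SSYTT(\Lambda)$ by pulling it back via $\phi^{-1}$ to a sum over $C_{\ell-1}\times \SYTT(\Lambda)$, and substitute the two identities
\[
\max(\mathfrak{T}) - 1 = |\alpha| + |\DES(\mathcal{T})|, \qquad \idx_1(\mathfrak{T}) = \idx_1(\mathcal{T})
\]
supplied by the proposition.

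After this substitution the exponent in the summand becomes $k\cdot(|\alpha| + |\DES(\mathcal{T})|) + \idx_1(\mathcal{T})$, which depends on $\alpha$ only through $|\alpha|$ and factors cleanly. Hence the sum splits as
\[
\sum_{\mathfrak{T} \in \SSYTT(\Lambda)} t^{k(\max(\mathfrak{T})-1)+\idx_1(\mathfrak{T})} = \Bigl(\sum_{\alpha \in C_{\ell-1}} t^{k|\alpha|}\Bigr) \cdot \sum_{\mathcal{T}\in \SYTT(\Lambda)} t^{k|\DES(\mathcal{T})|+\idx_1(\mathcal{T})}.
\]

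The first factor is a standard weak-composition generating function, equal to $(1-t^k)^{-(\ell-1)} = (1-t^k)^{-(n/k-1)}$, since weak compositions with $\ell-1$ parts are counted by the product of $\ell-1$ copies of the geometric series $\sum_{j\ge 0} t^{kj}$. Dividing both sides of the resulting identity by this factor (equivalently, multiplying the left-hand side of the lemma through by $(1-t^k)^{n/k-1}$) yields the claim.

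There is essentially no obstacle here; all the real combinatorial content has been packaged into the bijection $\phi$ and its two numerical properties. The only thing to be careful about is the trivial edge case $\ell = 0$ (i.e.\ $\lambda = \emptyset$), where both sides collapse to $1$ and the identity holds vacuously, and ensuring that the indexing convention for $\alpha \in C_{\ell-1}$ (weak compositions with $\ell-1$ parts) is consistent with the way the proposition defined $|\alpha| = \alpha_1+\dots+\alpha_{\ell-1}$.
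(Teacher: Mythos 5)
Your proof is correct and follows essentially the same route as the paper: both arguments invoke the bijection $\phi$, use the two numerical identities to factor out the weak-composition generating function $\sum_{\alpha\in C_{\ell-1}} t^{k|\alpha|} = (1-t^k)^{-(n/k-1)}$, and conclude by the substitution. The paper phrases this as a two-variable generating-function identity in $x,y$ followed by the substitution $x=t^k$, $y=t$, which is merely a notational repackaging of your factorization.
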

\begin{proof}
By our bijection $\phi$ we have in terms of generating functions
\[
\frac{1}{(1-x)^{n/k-1}} \sum_{\mathcal{T}\in \SYTT(\Lambda)}x^{|\DES(\mathcal{T})|}y^{\idx_1(\mathcal{T})} = \sum_{\mathfrak{T} \in \SSYTT(\Lambda)} x^{\max(\mathfrak{T})-1} y^{\idx_1(\mathfrak{T})}.
\]
The theorem follows by substituting $x=t^k$ and $y=t$.
\end{proof}

Now we can conclude our main theorem.

\begin{proof}[Proof of \cref{thm:mainThm}]
Combining \cref{thm:SYT-tuples}, \cref{thm:SSYT-tuples} and \cref{thm:bijection} yields the desired identity.
\end{proof}

\smallskip
\noindent \textbf{Acknowledgements.}
The author would like to thank Martin Rubey, Esther Banaian, Bruce Westbury and Christian Krattenthaler for helpful insights and discussions related to this work.
Moreover the author would like to thank the organisers of the Graduate Online Combinatorics Colloquium (GOCC) Galen Dorpalen-Barry, Alex McDonough and Andr\'es R. Vindas Mel\'endez for the opportunity to present this results and gain feedback at an early stage.

\printbibliography

\end{document}